\documentclass[12pt]{article}

\pdfsuppresswarningpagegroup=1

\usepackage{amsmath,amssymb,amsthm,amscd,amsfonts}
\usepackage{epsfig,pinlabel,paralist}
\usepackage{mathtools}
\usepackage[vmargin=1in, hmargin=1.25in]{geometry}
\usepackage[font=small,format=plain,labelfont=bf,up,textfont=it,up]{caption}
\usepackage{calc}
\usepackage{etoolbox}
\usepackage{microtype}
\usepackage[all,cmtip]{xy}

\usepackage[bookmarks, bookmarksdepth=2, colorlinks=true, linkcolor=blue, citecolor=blue, urlcolor=blue]{hyperref}

\setlength{\parindent}{0pt}
\setlength{\parskip}{\baselineskip}

\apptocmd{\thebibliography}{\raggedright}{}{}

\numberwithin{equation}{section}

\theoremstyle{plain}
\newtheorem{theorem}{Theorem}[section]
\newtheorem{maintheorem}{Theorem}

\newtheorem{lemma}[theorem]{Lemma}

\newtheorem{corollary}[theorem]{Corollary}

\newtheorem{claims}{Claim}

\theoremstyle{definition}

\newtheorem{defn}[theorem]{Definition}

\theoremstyle{remark}
\newtheorem{rmk}[theorem]{Remark}
\newenvironment{remark}[1][]{\begin{rmk}[#1] \pushQED{}}{\popQED \end{rmk}}

\newtheorem{eg}[theorem]{Example}

\DeclareMathOperator{\Hom}{Hom}
\DeclareMathOperator{\Diff}{Diff}
\DeclareMathOperator{\Emb}{Emb}

\DeclareMathOperator{\GL}{GL}
\newcommand\GLp{\GL^{+}}

\DeclareMathOperator{\SO}{SO}


\newcommand\R{\ensuremath{\mathbb{R}}}

\newcommand\Z{\ensuremath{\mathbb{Z}}}

\DeclareMathOperator{\HH}{H}
\newcommand\RH{\ensuremath{\widetilde{\HH}}}

\DeclareMathOperator{\RP}{\mathbb{RP}}

\DeclareMathOperator{\Aut}{Aut}
\DeclareMathOperator{\Out}{Out}

\newcommand\Set[2]{\ensuremath{\left\{\text{#1 $|$ #2}\right\}}}


\newcommand\cD{\ensuremath{\mathcal{D}}}

\newcommand\fD{\ensuremath{\mathfrak{D}}}

\newcommand\fT{\ensuremath{\mathfrak{T}}}

\newcommand\tM{\ensuremath{\widetilde{M}}}

\newcommand\tf{\ensuremath{\widetilde{f}}}

\newcommand\tx{\ensuremath{\widetilde{x}}}

\newcommand\tfD{\ensuremath{\widetilde{\fD}}}

\newcommand\oQ{\ensuremath{\overline{Q}}}

\newcommand\Figure[4]{
\begin{figure}[t]
\centering
\centerline{\psfig{file=#2,scale=#4}}
\caption{#3}
\label{#1}
\end{figure}}

\DeclareMathOperator{\ab}{ab}
\DeclareMathOperator{\Mod}{Mod}
\DeclareMathOperator{\Fr}{Fr}
\DeclareMathOperator{\Triv}{Triv}
\DeclareMathOperator{\HTriv}{HTriv}
\DeclareMathOperator{\Tw}{Twist}
\newcommand\TwNosep{\ensuremath{\Tw_{\text{ns}}}}
\DeclareMathOperator{\Sphere}{\mathbb{S}}
\newcommand\tGLp{\widetilde{\GL}^{+}}

\title{\vspace{-40pt}The mapping class group of connect sums of $S^2 \times S^1$\vspace{-15pt}}
\author{Tara Brendle \and Nathan Broaddus \and Andrew Putman\thanks{Supported in part by NSF grant DMS-1811210}}
\date{}

\begin{document}

\vspace{-10pt}
\maketitle

\vspace{-18pt}
\begin{abstract}
\noindent
Let $M_n$ be the connect sum of $n$ copies of $S^2 \times S^1$.  A classical theorem of Laudenbach says
that the mapping class group $\Mod(M_n)$ is an extension of $\Out(F_n)$ by a group $(\Z/2)^n$ generated
by sphere twists.  We prove that this extension splits, so $\Mod(M_n)$ is the semidirect
product of $\Out(F_n)$ by $(\Z/2)^n$, which $\Out(F_n)$ acts on via the dual of the natural surjection
$\Out(F_n) \rightarrow \GL_n(\Z/2)$.  Our splitting takes $\Out(F_n)$ to the subgroup of $\Mod(M_n)$
consisting of mapping classes that fix the homotopy class of a trivialization of the tangent bundle of $M_n$.
Our techniques also simplify various aspects of Laudenbach's original proof, including the identification 
of the twist subgroup with $(\Z/2)^n$. 
\end{abstract}

\section{Introduction}
\label{section:introduction}

The {\em mapping class group} of a closed oriented $3$-manifold $M^3$, denoted $\Mod(M^3)$, is the
group of isotopy classes of orientation-preserving diffeomorphisms of $M^3$.  In this paper,
we study the mapping class group of the connect sum $M_n$ of $n$ copies of $S^2 \times S^1$.
The fundamental group $\pi_1(M_n)$
is the free group $F_n$ on $n$ letters.  Since
diffeomorphisms of $M_n$ do not fix a basepoint, the action of $\Mod(M_n)$ on $\pi_1(M_n)$ is only
defined up to conjugation, so it gives a homomorphism
\[\rho\colon \Mod(M_n) \longrightarrow \Out(\pi_1(M)) \cong \Out(F_n).\]
It follows from work of Whitehead \cite{Whitehead1, Whitehead2} that $\rho$ is surjective.  Its kernel
was described by Laudenbach \cite{LaudenbachSpheres, LaudenbachBook}.

\paragraph{Sphere twists.}
The kernel of $\rho$ is the subgroup generated by sphere twists, which are defined as follows.  Let
$M^3$ be a closed oriented $3$-manifold and let
$S \subset M^3$ be a smoothly embedded $2$-sphere.  Fix a tubular neighborhood $U \cong S \times [0,1]$
of $S$.  Recall that $\pi_1(\SO(3),\text{id}) \cong \Z/2$ is generated by a loop $\ell\colon [0,1] \rightarrow \SO(3)$
that rotates $\R^3$ about an axis by one full turn.  Identifying $S$ with $S^2 \subset \R^3$, the {\em sphere twist}
about $S$, denoted $T_S$, is the isotopy class of the diffeomorphism $\tau\colon M^3 \rightarrow M^3$ that
is the identity outside $U$ and on $U \cong S \times [0,1]$ takes the form $\tau(s,t) = (\ell(t) \cdot s,t)$.
The isotopy class of $T_S$ only depends on the isotopy class of $S$.  In fact,
Laudenbach \cite{LaudenbachSpheres, LaudenbachBook}
proved that if $S$ and $S'$ are homotopic $2$-spheres in $M^3$ that are non-nullhomotopic, then
$S$ and $S'$ are isotopic, so $T_S$ actually only depends on the homotopy class of $S$.

\paragraph{Actions of sphere twists.}
We have $\SO(3) \cong \RP^3$, so $\pi_1(\SO(3),\text{id}) \cong \Z/2$.  It follows that the mapping class $T_S$ has order at most $2$.  It
follows from Laudenbach's work that in the case $M^3 = M_n$, the sphere twist $T_S$ is trivial if 
$S$ separates $M_n$ and has order
$2$ if $S$ is nonseparating.  Showing that $T_S$ is ever nontrivial is quite subtle since $T_S$ fixes
the homotopy class of any loop or surface $Z$ in $M^3$, and thus cannot be detected by
most basic algebro-topological invariants.  To see this, let $U \cong S \times [0,1]$ be the tubular
neighborhood used to construct $T_S$ and let $p_0 \in S$ be one of the two points of $S$ lying on the axis
of rotation used to construct $T_S$.  Homotope $Z$ to be transverse to $S$. 
The intersection $Z \cap S$ is then either a collection
of circles (if $\dim(Z)=2$) or points (if $\dim(Z)=1$).  As is shown in Figure \ref{figure:gather}, we can
then homotope $Z$ such that $Z \cap U \subset p_0 \times [0,1]$, so $T_S$ fixes $Z$.

\Figure{figure:gather}{Gather}{On the left is $S$ along with $p_0$ and $Z \cap S$ (which here
is $1$-dimensional, so $\dim(Z)=2$).  On the right we show how to homotope $Z$ such
that $Z \cap S = \{p_0\}$ -- choose a point $q \in S \setminus (\{p_0\} \cup Z)$, and homotope
$Z$ so as to push its intersection with $S$ along paths from $q$ to $p_0$ until it is entirely
contained in $p_0$.}{95}

\paragraph{Twist subgroup.}
The {\em twist subgroup} of $\Mod(M^3)$, denoted $\Tw(M^3)$, is the subgroup generated by all sphere twists.
For $f \in \Mod(M^3)$ and a sphere twist $T_S \in \Tw(M^3)$, we have
\[f T_S f^{-1} = T_{f(S)}.\]
This implies that $\Tw(M^3)$ is a normal subgroup of $\Mod(M^3)$.  Also, if $T_S$ and $T_{S'}$ are sphere twists,
we saw above that $T_{S'}(S)$ is homotopic to $S$.  Since a sphere twist only depends on the homotopy class
of the sphere along which we are twisting, setting $f = T_{S'}$ in the above relation we get
\[T_{S'} T_S T_{S'}^{-1} = T_S.\]
In other words, $\Tw(M^3)$ is abelian.

\paragraph{Laudenbach sequence.}
Laudenbach's theorem can thus be summarized as saying that there is a short exact sequence
\begin{equation}
\label{eqn:laudenbachseq}
1 \longrightarrow \Tw(M_n) \longrightarrow \Mod(M_n) \stackrel{\rho}{\longrightarrow} \Out(F_n) \longrightarrow 1.
\end{equation}
He also proved that $\Tw(M_n) \cong (\Z/2)^n$ and is generated by the sphere twists about the core
spheres $S^2 \times \ast$ of the $n$ different $S^2 \times S^1$ summands of $M_n$.  
This theorem raises two natural questions:
\begin{compactenum}
\item Does the extension \eqref{eqn:laudenbachseq} split?
\item The conjugation action of $\Mod(M_n)$ on its normal abelian group $\Tw(M_n) \cong (\Z/2)^n$ induces
an action of $\Out(F_n)$ on $\Tw(M_n) \cong (\Z/2)^n$.  What action is this?
\end{compactenum}

\paragraph{Main theorem.}
Our main theorem answers both of these questions.  It says that the extension \eqref{eqn:laudenbachseq}
does split, and in fact the image of the splitting $\Out(F_n) \rightarrow \Mod(M_n)$ has a simple geometric
description: it is the stabilizer of the homotopy class of a trivialization of the tangent bundle of $M_n$.
A precise statement is as follows.

\begin{maintheorem}
\label{maintheorem:split}
Let $[\sigma_0]$ be the homotopy class of a trivialization $\sigma_0$ of the tangent bundle of $M_n$
and let $(\Mod(M_n))_{[\sigma_0]}$ be the $\Mod(M_n)$-stabilizer of $[\sigma_0]$.  The following then hold:
\begin{compactitem}
\item $\Mod(M_n) = \Tw(M_n) \rtimes (\Mod(M_n))_{[\sigma_0]}$.
\item $\Tw(M_n) \cong \HH^1(M_n;\Z/2)$ as a $\Mod(M_n)$-module.
\item $(\Mod(M_n))_{[\sigma_0]} \cong \Out(F_n)$.
\end{compactitem}
\end{maintheorem}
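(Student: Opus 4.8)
The plan is to have $\Mod(M_n)$ act on the homotopy classes of trivializations of $TM_n$ and to extract the splitting from the primary obstruction to deforming one trivialization to another. Since $M_n$ is a closed orientable $3$-manifold, $TM_n$ is trivial, and the set of homotopy classes of trivializations is a torsor over the group $[M_n,\SO(3)]$ (a trivialization times a change-of-frame map $M_n\to\SO(3)$ is again a trivialization), on which $\Mod(M_n)$ acts compatibly, acting on $[M_n,\SO(3)]$ through $\rho$. Because $\pi_2(\SO(3))=0$, restriction to the $1$-skeleton gives a natural surjection $[M_n,\SO(3)]\twoheadrightarrow\HH^1(M_n;\Z/2)$ with kernel $K$ a subquotient of $\HH^3(M_n;\pi_3\SO(3))\cong\Z$; equivalently, two trivializations $\sigma,\sigma'$ have a \emph{primary difference} $d(\sigma,\sigma')\in\HH^1(M_n;\Z/2)$, natural under diffeomorphisms, vanishing exactly when $\sigma\simeq\sigma'$ over the $1$-skeleton. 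Setting $c_1(f)=d([\sigma_0],f\cdot[\sigma_0])$ defines a crossed homomorphism $c_1\colon\Mod(M_n)\to\HH^1(M_n;\Z/2)$ for the $\rho$-action, with $(\Mod(M_n))_{[\sigma_0]}\subseteq\ker c_1$; and since $\Tw(M_n)=\ker\rho$ acts trivially on $\HH^1(M_n;\Z/2)$, the restriction $c_1|_{\Tw(M_n)}$ is an honest homomorphism $\Tw(M_n)\to\HH^1(M_n;\Z/2)$.

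The geometric core is to evaluate $c_1$ on a sphere twist: I claim $c_1(T_S)$ is the mod-$2$ Poincar\'e dual of $[S]\in\HH_2(M_n;\Z/2)$. To see this one fixes an explicit trivialization of $TM_n$ on the tubular neighborhood $U\cong S\times[0,1]$ and checks, using that the model diffeomorphism $\tau$ defining $T_S$ is ``rotation by $\ell(t)$'' in the $[0,1]$-direction, that the associated change-of-frame map $M_n\to\SO(3)$, restricted to a loop $\gamma$ transverse to $S$, is the loop $\ell$ generating $\pi_1(\SO(3))$ traversed once per point of $\gamma\cap S$; hence the resulting class in $\HH^1(M_n;\Z/2)=\Hom(\pi_1(M_n),\Z/2)$ sends $\gamma$ to $(\gamma\cdot S)\bmod 2$. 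Letting $S$ run over the $n$ core spheres $S^2\times\ast$, which are Poincar\'e dual to a basis of $\HH^1(M_n;\Z/2)$, this shows $c_1|_{\Tw(M_n)}$ is surjective; combined with the fact that $\Tw(M_n)$ is abelian and generated by the $n$ core-sphere twists (so $|\Tw(M_n)|\le 2^n$), it is an isomorphism $\Tw(M_n)\cong\HH^1(M_n;\Z/2)$ — recovering along the way Laudenbach's computation $\Tw(M_n)\cong(\Z/2)^n$. Its $\Mod(M_n)$-equivariance, with $\Mod(M_n)$ acting on $\Tw(M_n)$ by conjugation and on $\HH^1(M_n;\Z/2)$ via $\rho$, follows from $fT_Sf^{-1}=T_{f(S)}$ together with the naturality of Poincar\'e duality for orientation-preserving diffeomorphisms; since the $\Out(F_n)$-action on $\HH^1(M_n;\Z/2)$ is the dual of $\Out(F_n)\to\GL_n(\Z/2)$, this is exactly the second bullet.

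For the first and third bullets, note that since $c_1|_{\Tw(M_n)}$ is a bijection and $\Tw(M_n)$ acts by translation through $c_1$ on the induced torsor over $\HH^1(M_n;\Z/2)$, the subgroup $\Tw(M_n)$ already acts simply transitively there. Hence every $f\in\Mod(M_n)$ is $t\cdot g$ with $t\in\Tw(M_n)$ and $c_1(g)=0$, while $\Tw(M_n)\cap\ker c_1=\ker(c_1|_{\Tw(M_n)})=1$; as $\Tw(M_n)\trianglelefteq\Mod(M_n)$ this gives $\Mod(M_n)=\Tw(M_n)\rtimes\ker c_1$, and $\rho$ restricts to an isomorphism $\ker c_1\cong\Out(F_n)$ (injective because $\ker\rho=\Tw(M_n)$, surjective because $\Mod(M_n)=\Tw(M_n)\cdot\ker c_1$ and $\rho$ is onto). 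It then remains to identify $\ker c_1$ with the honest stabilizer $(\Mod(M_n))_{[\sigma_0]}$: the difference is measured by a homomorphism $\ker c_1\to K$ (the $\Mod(M_n)$-action on $K$ being trivial, as $K$ is a subquotient of $\HH^3(M_n;\Z)$ and $\Mod(M_n)$ preserves orientation), which must vanish since $\ker c_1\cong\Out(F_n)$ has finite abelianization while $K$, as a computation of $[M_n,\SO(3)]$ shows, is torsion-free (indeed $\cong\Z$). This yields $\Mod(M_n)=\Tw(M_n)\rtimes(\Mod(M_n))_{[\sigma_0]}$ and $(\Mod(M_n))_{[\sigma_0]}\cong\Out(F_n)$.

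I expect the main obstacle to be the computation of $c_1$ on sphere twists in the second step: turning the ``rotation by $\ell(t)$'' heuristic into a proof requires an explicit model trivialization of $TM_n|_U$ and a careful examination of $d\tau$, and one must confirm that no spurious $\pi_3(\SO(3))$-contribution appears (equivalently, that $c_1$, not merely $d$ of the full trivializations, already captures the $\Tw(M_n)$-action — which is automatic once one notes $c_1(T_S)$ is $2$-torsion). This local computation, together with the exact determination of $[M_n,\SO(3)]$ needed to pass from $\ker c_1$ to $(\Mod(M_n))_{[\sigma_0]}$, is where the genuine content lies; everything else is torsor bookkeeping built on Laudenbach's exact sequence.
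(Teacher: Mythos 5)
Your proposal is correct and follows essentially the same route as the paper: your primary-difference crossed homomorphism $c_1$ is the paper's twisting crossed homomorphism $\fT$ (the derivative crossed homomorphism into $[M_n,\GLp_3(\R)]\simeq[M_n,\SO(3)]$ followed by the $\pi_1$-functor), your sphere-twist computation is the paper's Lemma~\ref{lemma:spheretwist}, and your final step comparing $\ker c_1$ with the stabilizer via the torsion-free group $K\cong[M_n,S^3]\cong\Z$ against the torsion abelianization of $\Out(F_n)$ is exactly the paper's Lemma~\ref{lemma:fixtrivial}. The only differences are presentational (obstruction-theoretic language versus explicit frame-bundle constructions).
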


\begin{remark}
Before Laudenbach's work, Gluck \cite{GluckOriginal} proved that $\Mod(M_1) \cong \Z/2 \times \Z/2$.  The first factor
was a sphere twist, and the second factor was $\Out(F_1) = \Z/2$.  This is of course a special case of
Theorem \ref{maintheorem:split}.
\end{remark}

\paragraph{Sphere complex and $\Out(F_n)$.}
Laudenbach's exact sequence \eqref{eqn:laudenbachseq} plays an important role in the study
of $\Out(F_n)$.  In his seminal paper \cite{HatcherOriginal}, Hatcher defined the {\em sphere complex}
$\Sphere_n$ to be the simplicial complex whose $k$-simplices are sets $\{S_0,\ldots,S_k\}$ of 
isotopy classes of non-nullhomotopic smoothly embedded $2$-spheres in $M_n$ that can be realized
disjointly.  One of his main theorems says that $\Sphere_n$ is contractible.  The group $\Mod(M_n)$
acts on $\Sphere_n$, and since sphere twists fix the isotopy class of any smoothly embedded $2$-sphere
the twist subgroup $\Tw(M_n)$ acts trivially.  By \eqref{eqn:laudenbachseq}, we thus get
an action of $\Out(F_n)$ on $\Sphere_n$.  

The space $\Sphere_n$ is also sometimes called the {\em free splitting complex}
and has played an important role in a huge amount of subsequent work (see, e.g.,
\cite{AramayonaSouto, ClayQingRafi, HamenstadtHensel, HandelMosher, HatcherVogtmann, HilionHorbez, Horbez, KapovichLustig}).
It is unsatisfying that the original construction of the action of $\Out(F_n)$ on
$\Sphere_n$ was so indirect: you first construct an action of $\Mod(M_n)$, then
notice that $\Tw(M_n)$ acts trivially, and only then get an induced action
of the quotient group $\Out(F_n) = \Mod(M_n)/\Tw(M_n)$.  It follows from
Theorem \ref{maintheorem:split} that $\Out(F_n)$ can be embedded as a subgroup
of $\Mod(M_n)$, so there is no longer a need to perform this indirect
construction.

\paragraph{Nontriviality of sphere twists.}
Our proof also gives a new and easier argument for seeing that the sphere twists $T_S$ about nonseparating
spheres in $M_n$ (and other $3$-manifolds) are nontrivial (cf.\ Corollary~\ref{corollary:identifytwist}).  This is not as easy as one might expect.  The usual
way that one studies a group like $\Mod(M_n)$ is via its action on homotopy classes of submanifolds of $M_n$.  However, the
sphere twists $T_S$ fix the homotopy classes of all loops and surfaces in $M_n$, so a new idea is needed.  Laudenbach used
an argument involving framed cobordism and the Pontryagin--Thom construction, while as we describe below we study
the action of $\Mod(M_n)$ on trivializations of the tangent bundle.  

\begin{remark}
The idea of using the Pontryagin--Thom construction to study sphere-twists in $3$-manifolds goes
back to early work of Pontryagin; see the example at the end of \cite[\S 4]{PontryaginThree}.
\end{remark}

\paragraph{Derivative crossed homomorphism.}
The heart of our proof of Theorem \ref{maintheorem:split} is what we call the derivative crossed
homomorphism.  This is a cocycle on $\Mod(M_n)$ constructed using the action of $\Mod(M_n)$ on
the set of trivializations of the tangent bundle of $M_n$.  A similar idea (using stable trivializations
rather than trivializations) has been used to study mapping class groups of high-dimensional manifolds,
where it is used to understand various short exact sequences coming from surgery theory.  The
earliest appearance of this idea seems to be work of Krylov \cite{Krylov}, and it was further
developed by Crowley \cite{Crowley} and Krannich \cite{Krannich}.

\paragraph{Automorphisms vs outer automorphisms.}
Let $M_{n,1}$ be $M_n$ equipped with a basepoint $\ast \in M_{n}$, and define $\Mod(M_{n,1})$ to be
the group of isotopy classes of orientation-preserving diffeomorphisms of $M_{n,1}$ that fix $\ast$.
The group $\Mod(M_{n,1})$ then acts on $\pi_1(M_{n,1},\ast) = F_n$, so we get a homomorphism
$\Mod(M_{n,1}) \rightarrow \Aut(F_n)$.  It follows from Laudenbach's work that we also have a short
exact sequence
\[1 \longrightarrow \Tw(M_{n,1}) \longrightarrow \Mod(M_{n,1}) \longrightarrow \Aut(F_n) \longrightarrow 1\]
and that $\Tw(M_{n,1}) = (\Z/2)^n$.  Our work shows that this sequence also splits, and a result
identical to Theorem \ref{maintheorem:split} holds.  This can be proved either by adapting our proof (which
needs almost no changes), or by using the exact sequence
\begin{equation}
\label{eqn:birman}
1 \longrightarrow F_n \longrightarrow \Mod(M_{n,1}) \longrightarrow \Mod(M_n) \longrightarrow 1
\end{equation}
arising from the long exact sequence in homotopy groups of the fiber bundle
\[\Diff^{+}(M_n,\ast) \longrightarrow \Diff^{+}(M_n) \longrightarrow M_n.\]
The $F_n$ in \eqref{eqn:birman} is the image of $\pi_1(M_n)$ in $\Mod(M_{n,1}) = \pi_0(\Diff^{+}(M_n,\ast))$ and
maps to the inner automorphisms in $\Aut(F_n)$.  We leave the details to the interested reader.

\paragraph{Other $3$-manifolds.}
Let $M^3$ be an arbitrary closed orientable $3$-manifold and let $\pi = \pi_1(M^3)$.  The twist subgroup $\Tw(M^3)$ is still
an abelian normal subgroup of the mapping class group $\Mod(M^3)$, and it turns out that
there is still an exact sequence
\begin{equation}
\label{eqn:generalsequence}
1 \longrightarrow \Tw(M^3) \longrightarrow \Mod(M^3) \longrightarrow G \longrightarrow 1,
\end{equation}
where $G < \Out(\pi)$ is the image of $\Mod(M^3)$ in $\Out(\pi)$.  See \cite[Proposition 2.1]{HatcherWahl}
for how to extract this from the literature; in fact, this reference also gives an appropriate
but more complicated statement for $3$-manifolds with boundary.  It is often the case that
$G = \Out(\pi)$; see \cite[Proposition 2.2]{HatcherWahl} for some conditions that ensure this.

In light of Theorem \ref{maintheorem:split}, it is natural to wonder whether \eqref{eqn:generalsequence}
splits.  Let $\TwNosep(M^3)$ be the subgroup of $\Tw(M^3)$ generated by sphere twists about
nonseparating spheres. Our proof of Theorem \ref{maintheorem:split} can be generalized to show
that we have a splitting
\[\Mod(M^3) = \TwNosep(M^3) \rtimes \Gamma,\]
where $\Gamma$ is a subgroup of $\Mod(M^3)$.   However, we cannot show that $\Gamma$ can be taken
to be the stabilizer of the homotopy class of a trivialization of $T M^3$ (c.f.\ the
proof of Theorem \ref{theorem:weakmain} below).  

Unfortunately, we do not know how to deal
with separating sphere twists.  For $M^3 = M_n$, separating sphere twists are always trivial,
so $\TwNosep(M^3) = \Tw(M^3)$ and separating twists can be ignored when studying
\eqref{eqn:generalsequence}.  However, for general $3$-manifolds the situation is very complicated.  Let $S \subset M^3$
be a $2$-sphere that separates $M^3$.
\begin{itemize}
\item In \cite{Hendriks}, Hendriks gives a remarkable characterization of when $T_S$
is homotopic to the identity.  Namely, $T_S$ is homotopic to the identity if and only if for one
of the two components $N$ of cutting $M^3$ open along $S$ the following strange condition holds.
Let $P$ be a prime summand of the result of gluing a closed $3$-ball to $\partial N = S^2$.
Then either $P = S^2 \times S^1$, or $P$ has a finite fundamental group whose Sylow $2$-subgroup
is cyclic.
\item However, this is not the whole story.  In \cite{FriedmanWitt}, Friedman--Witt show that in
some cases the separating sphere twists that Hendriks showed were homotopic to the identity
are not {\em isotopic} to the identity, and thus still define nontrivial elements of $\Mod(M^3)$.
\end{itemize}
What happens in the general case is unclear.  See \cite[Remark 2.4]{HatcherWahl} for some
further discussion of it.

\paragraph{Outline.}
The outline of our paper is as follows.  We start in \S \ref{section:extension} by constructing the
exact sequence \eqref{eqn:laudenbachseq}.  To make our paper self-contained, we give a mostly complete proof
of this, simplifying some details from Laudenbach's original paper.  We then have a preliminary
algebraic section \S \ref{section:crossed} on crossed homomorphisms and their relationship
to splitting exact sequences.  We then construct the crossed homomorphisms we need in
\S \ref{section:derivative} and \S \ref{section:twisting} before closing with
\S \ref{section:completing}, which takes care of a few final details.

\paragraph{Notational conventions.}
It will be important for us to distinguish between a diffeomorphism and its isotopy class.  For $f \in \Diff^{+}(M_n)$ we will write $[f] \in \Mod(M_n)$ for the isotopy class of $f$.  More generally, we will use square brackets frequently to indicate that something is being taken up to homotopy/isotopy, though we will try to be explicit about this whenever it might be confusing to the reader.

\paragraph{Acknowledgments.}
We would like to thank Jake Landgraf for helpful conversations and Allen Hatcher for some useful
references, historical remarks, and corrections.  We would also like to thank Oscar Randal-Williams
for pointing out the work of Crowley \cite{Crowley}, Krannich \cite{Krannich}, and Krylov \cite{Krylov}.
We additionally thank Diarmuid Crowley, Dan Margalit, and Richard Wade for comments and corrections.

\section{Constructing the extension}
\label{section:extension}

This preliminary section discusses some aspects of Laudenbach's work we will need for our proof.

\paragraph{What is needed.}
Recall from the introduction that Laudenbach \cite{LaudenbachSpheres} proved that
$\Mod(M_n)$ is an extension of $\Out(F_n)$ by $\Tw(M_n)$ and that $\Tw(M_n) \cong (\Z/2)^n$.
Theorem \ref{maintheorem:split} strengthens this and its proof will give as a byproduct that 
$\Tw(M_n) \cong (\Z/2)^n$, but it will depend on the following piece of Laudenbach's work.

\begin{theorem}[Laudenbach, \cite{LaudenbachSpheres, LaudenbachBook}]
\label{theorem:laudenbach}
The map 
\[\rho\colon \Mod(M_n) \rightarrow \Out(\pi_1(M_n)) = \Out(F_n)\]
is surjective with kernel $\ker(\rho) = \Tw(M_n)$.  Also, $\Tw(M_n)$ is generated
by the sphere twists about the core spheres $S^2 \times \ast$ of the $n$ summands of $S^2 \times S^1$ in $M_n$.
\end{theorem}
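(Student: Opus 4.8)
The plan is to establish the three assertions in turn, the last being much the deepest. For surjectivity of $\rho$ it suffices to realize a generating set of $\Aut(F_n)$ by orientation-preserving diffeomorphisms of $M_n$, since $\Aut(F_n)$ surjects onto $\Out(F_n)$. Using the classical generating set consisting of the permutations of a free basis $x_1,\dots,x_n$ together with the inversion $x_1\mapsto x_1^{-1}$ and the transvection $x_1\mapsto x_1x_2$: a basis permutation is realized by the corresponding permutation of the $S^2\times S^1$ summands of $M_n$; the inversion is realized by the orientation-preserving diffeomorphism $(v,z)\mapsto(-v,\bar z)$ of a single $S^2\times S^1$ summand, which, being orientation-preserving, may be isotoped to be the identity near a connect-sum ball and then extended by the identity over the rest of $M_n$; and the transvection is realized by a handle slide dragging one $S^2\times S^1$ summand once around the circle factor of another. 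Each of these fixes the remaining basis elements up to conjugacy, so their images generate $\Out(F_n)$. For the kernel, recall from the introduction that every sphere twist fixes the homotopy class of each loop and so acts trivially on $\pi_1(M_n)$; hence $\langle T_{S_1},\dots,T_{S_n}\rangle\subseteq\Tw(M_n)\subseteq\ker\rho$, and it remains only to prove the single inclusion $\ker\rho\subseteq\langle T_{S_1},\dots,T_{S_n}\rangle$, which then forces all three groups to coincide.

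To prove that inclusion, fix the standard system $\Sigma=\{S_1,\dots,S_n\}$ of core spheres. Cutting $M_n$ along $\Sigma$ produces a connected $3$-manifold $Y$ diffeomorphic to $S^3$ with $2n$ open balls removed, and $M_n$ is recovered from $Y$ by gluing its $2n$ boundary spheres in $n$ pairs; extending by the identity over the gluing gives a homomorphism $\pi_0\Diff(Y\text{ rel }\partial Y)\to\Mod(M_n)$ carrying the twist of a collar of either boundary sphere arising from $S_i$ to $T_{S_i}$. Now let $[f]\in\ker\rho$. First, $f(S_i)$ is isotopic to $S_i$ for each $i$: an embedded essential $2$-sphere in $M_n$ determines, up to conjugacy, a free splitting of $F_n$; this assignment is $\Out(F_n)$-equivariant and injective on isotopy classes (Laudenbach; see also Hatcher's work on sphere systems), and $\rho([f])=1$ fixes the splitting class of $S_i$. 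Next, by an innermost-disk surgery argument on the circles of $f(\Sigma)\cap\Sigma$ — using Laudenbach's theorem that homotopic non-nullhomotopic spheres are isotopic — one isotopes $f$ so that $f(\Sigma)=\Sigma$ with each $S_i$ preserved; triviality of $\rho([f])$ on $\HH^1(M_n;\Z)$ forces $f$ to preserve the co-orientation of each $S_i$, so $f$ restricts to an orientation-preserving diffeomorphism of each $S_i$. Since $\Diff^{+}(S^2)\simeq\SO(3)$ is connected, a further isotopy makes $f$ the identity on $\Sigma$ and on a collar of $\Sigma$; the only freedom in this last step is the homotopy class of the path chosen from $f|_{S_i}$ to the identity in $\Diff^{+}(S^2)$, and changing it by the generator of $\pi_1(\SO(3))=\Z/2$ alters the resulting element of $\Mod(M_n)$ by exactly $T_{S_i}$. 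Hence, modulo $\langle T_{S_1},\dots,T_{S_n}\rangle$, the class $[f]$ is the image of some $[g]\in\pi_0\Diff(Y\text{ rel }\partial Y)$.

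It remains to show that $\pi_0\Diff(Y\text{ rel }\partial Y)$ is generated by its $2n$ boundary twists, which I would prove by induction on the number $k$ of boundary spheres of a holed sphere $Y_k:=S^3\setminus(k\text{ open balls})$. The base case $Y_1=D^3$ is Cerf's theorem $\pi_0\Diff(D^3\text{ rel }\partial D^3)=0$. For the inductive step, write $Y_k=Y_{k-1}\setminus(\text{open ball})$ and restrict diffeomorphisms to a fixed embedded ball, obtaining a fibration
\[\Diff(Y_k\text{ rel }\partial Y_k)\longrightarrow\Diff(Y_{k-1}\text{ rel }\partial Y_{k-1})\longrightarrow\Emb^{+}(D^3,\Interior Y_{k-1}),\]
where the evaluation map identifies the base with the oriented frame bundle of $TY_{k-1}$, which is trivial because $Y_{k-1}$ is parallelizable. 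As $Y_{k-1}$ is simply connected, the base is connected with $\pi_1=\pi_1(\SO(3))=\Z/2$, and the generator maps under the connecting homomorphism to the twist about the new boundary sphere; the long exact sequence of the fibration then shows $\pi_0\Diff(Y_k\text{ rel }\partial Y_k)$ is generated by the $k$ boundary twists. Taking $k=2n$ and pushing forward under the regluing homomorphism — under which the two boundary twists coming from $S_i$ both map to $T_{S_i}$ — shows $[g]$, and hence $[f]$, lies in $\langle T_{S_1},\dots,T_{S_n}\rangle$. Combined with the inclusions above, this yields $\ker\rho=\Tw(M_n)=\langle T_{S_1},\dots,T_{S_n}\rangle$, completing the proof.

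The main obstacle is the normalization $f(\Sigma)=\Sigma$ in the second step. Making the surgery rigorous requires the innermost-disk technology for sphere systems together with Laudenbach's isotopy theorem, and — most delicately — one must keep careful track of the sphere twists introduced when $f$ is straightened along the collars of $\Sigma$, since this is the one place in the argument where sphere twists genuinely appear. By contrast, realizing the generators of $\Out(F_n)$ and computing $\pi_0\Diff(Y\text{ rel }\partial Y)$ from Cerf's theorem are comparatively routine.
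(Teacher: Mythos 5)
Your proposal is correct and its overall architecture coincides with the paper's: realize a generating set of $\Aut(F_n)$ by diffeomorphisms for surjectivity; for the kernel, isotope $f$ to preserve the system $\Sigma$ of core spheres, cut along $\Sigma$ to reduce to the $2n$-holed $S^3$, and compute $\pi_0\Diff$ of the holed sphere by induction on the number of boundary spheres using Cerf's theorem and the fibration over the space of embedded balls (your Step 3 is, up to the choice of base case $D^3$ versus $S^3$, identical to the paper's Lemma \ref{lemma:spheretwists}). The one place you genuinely diverge is the normalization $f(\Sigma)=\Sigma$. The paper gets the homotopy $f\circ\iota\simeq\iota$ of the whole system from Theorem \ref{theorem:pi2} (a basepoint-preserving diffeomorphism acting trivially on $\pi_1$ acts trivially on $\pi_2$, proved there by a short Poincar\'e-duality argument in the universal cover), and then applies Theorem \ref{theorem:sphereisotopy} directly to the system. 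You instead invoke the correspondence between isotopy classes of essential embedded spheres and free splittings of $F_n$, sphere by sphere, and then an innermost-disk surgery to make the spheres simultaneously standard. This works, but be aware of two costs: (i) the injectivity of the sphere-to-splitting map on isotopy classes is itself a nontrivial theorem whose standard proofs go through exactly the ``$\pi_1$ determines $\pi_2$'' statement plus Laudenbach's homotopy-implies-isotopy theorem, so you are black-boxing strictly more than the paper does, and you should make sure the version you cite is not itself derived from the Laudenbach exact sequence you are proving (the usual construction of the $\Out(F_n)$-action on the sphere complex is); and (ii) Laudenbach's isotopy theorem is already stated for disjoint systems of spheres, so the innermost-disk surgery you propose in order to upgrade from individual spheres to the system is redundant work that the paper avoids by quoting the system version outright. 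On the credit side, you are more careful than the paper's write-up at the collar-straightening step, where you explicitly track the $\pi_1(\SO(3))=\Z/2$ ambiguity that produces the sphere twists $T_{S_i}$.
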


\begin{remark}
\label{remark:laudenbachnontrivial}
Theorem \ref{theorem:laudenbach} does {\em not} assert that the indicated sphere twists
are nontrivial.  As we discussed in the introduction, Laudenbach proved that they are and
that $\Tw(M_n) \cong (\Z/2)^n$, but we will only establish this (in a stronger form) later; see
Corollary \ref{corollary:identifytwist}.
\end{remark}

To make this paper more self-contained, this section contains a mostly complete sketch of a proof of
Theorem~\ref{theorem:laudenbach}.  We will follow the outline of Laudenbach's original proof,
but we will simplify one key step (see Theorem \ref{theorem:pi2} below).

\paragraph{Homotopy vs isotopy for spheres.}
Our proof of Theorem \ref{theorem:laudenbach} will depend on three preliminary results.  The first
is the following:

\begin{theorem}[Laudenbach, \cite{LaudenbachSpheres, LaudenbachBook}]
\label{theorem:sphereisotopy}
Let $M^3$ be a closed oriented $3$-manifold and let $\iota,\iota'\colon \sqcup_{i=1}^k S^2 \rightarrow M^3$ be
homotopic embeddings of disjoint smoothly embedded spheres.  Assume that none of the components of
the images of $\iota$ or $\iota'$ are nullhomotopic.  Then $\iota$ and $\iota'$ are ambient isotopic.
\end{theorem}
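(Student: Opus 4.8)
The plan is to build the ambient isotopy in stages, by the classical innermost-disk method. Write $\Sigma = \iota(\sqcup_i S^2)$ and $\Sigma' = \iota'(\sqcup_i S^2)$. It suffices to produce an ambient isotopy of $M^3$ carrying the set $\Sigma$ onto the set $\Sigma'$ componentwise; once the images agree, the remaining bookkeeping needed to match parametrizations up to isotopy is routine and I would omit it. First I would put $\Sigma$ and $\Sigma'$ in general position, so that $\Sigma \cap \Sigma'$ is a finite disjoint union of circles, and among all embeddings isotopic to $\iota'$ and transverse to $\Sigma$ fix one for which the number of these circles is as small as possible. The first goal is to show this number is $0$, and the second is to show that two \emph{disjoint} homotopic systems of essential embedded spheres must cobound a disjoint union of product regions $S^2 \times [0,1]$ — which at once produces the desired ambient isotopy. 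Both goals are driven by innermost-disk surgery: given an intersection circle $c$ bounding a disk $D$ whose interior is disjoint from the other system, one compresses along $D$ and studies the two spheres that result; any one of them that is nullhomotopic bounds an embedded $3$-ball (here one invokes the Poincaré conjecture, now a theorem), and that ball is then used either to isotope $\Sigma'$ so as to decrease the number of intersection circles, contradicting minimality, or to exhibit a product region lying between a component of $\Sigma$ and a component of $\Sigma'$.

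The step I expect to be the genuine obstacle — and the reason this is a theorem of Laudenbach rather than a short cut-and-paste argument — is controlling these compressions. It is simply not true that compressing an essential embedded $2$-sphere along a clean disk yields a nullhomotopic sphere: for instance, tubing together two disjoint parallel copies of a core sphere $S^2 \times \ast$ of an $S^2 \times S^1$ summand produces an essential sphere admitting a clean compressing disk whose compression returns two essential spheres. Deciding which surgeries actually make progress therefore requires genuinely using the homotopy hypothesis together with the module structure of $\pi_2(M^3)$ over $\Z[\pi_1(M^3)]$ (and the Sphere Theorem), and organizing this is the technical heart of Laudenbach's work.

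Accordingly, in the write-up I would take Theorem~\ref{theorem:sphereisotopy} as a black box from \cite{LaudenbachSpheres, LaudenbachBook} rather than reproduce the argument, and reserve streamlining for the $\pi_2$-computation recorded as Theorem~\ref{theorem:pi2}, which is the input that actually drives the proof of Theorem~\ref{theorem:laudenbach}.
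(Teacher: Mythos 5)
Your proposal to treat this result as a black box from \cite{LaudenbachSpheres, LaudenbachBook} is exactly what the paper does: it states the theorem with attribution and explicitly omits the proof as lengthy and not illuminating for the rest of the argument. Your sketch of the innermost-disk strategy and your identification of the real difficulty (compressions of essential spheres need not produce nullhomotopic pieces, so one must genuinely use the homotopy hypothesis and the $\Z[\pi_1]$-module structure on $\pi_2$) are accurate but play no role in the paper, which simply defers to Laudenbach as you do.
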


We omit the proof of Theorem \ref{theorem:sphereisotopy} since it is lengthy and its details do not
shed much light on our work.

\paragraph{Action on second homotopy group.}
The second preliminary result is the following theorem of
Laudenbach.  Our proof is much shorter than his proof.
We remark that Hatcher-Wahl have given a different (but somewhat
longer) simplified proof in \cite[Appendix]{HatcherWahl}.

\begin{theorem}[Laudenbach]
\label{theorem:pi2}
Let $M^3$ be a closed oriented $3$-manifold equipped with a basepoint $x_0$ 
and let $f\colon (M^3,x_0) \rightarrow (M^3,x_0)$ be a basepoint-preserving diffeomorphism
such that $f_{\ast}\colon \pi_1(M^3,x_0) \rightarrow \pi_1(M^3,x_0)$ is the identity.  Then
$f_{\ast}\colon \pi_2(M^3,x_0) \rightarrow \pi_2(M^3,x_0)$ is the identity.
\end{theorem}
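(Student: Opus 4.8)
The plan is to reduce the statement to a transparent group-theoretic one by passing to the universal cover and applying Poincaré duality, at which point the hypothesis that $f_*$ is the identity on $\pi_1$ does all the work. Write $\pi = \pi_1(M^3, x_0)$, let $\tM \to M^3$ be the universal cover, and fix a lift $\tx_0$ of $x_0$. First I would lift $f$ to the diffeomorphism $\tf \colon \tM \to \tM$ with $\tf(\tx_0) = \tx_0$; since $f_*$ is the identity on $\pi$, this lift commutes with every deck transformation, i.e.\ $\tf$ is $\pi$-equivariant. As $\tM$ is simply connected, Hurewicz identifies $\pi_2(M^3, x_0) = \pi_2(\tM) = \HH_2(\tM;\Z)$, and by definition the right-hand side is $\HH_2(M^3;\Z\pi)$, the homology of $M^3$ with local coefficients in the group ring. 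Under these identifications $f_*$ on $\pi_2$ becomes the $\Z\pi$-linear map $\tf_*$ on $\HH_2(M^3;\Z\pi)$, and the goal is to show this map is the identity.

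Next I would bring in Poincaré duality. Since $M^3$ is closed and oriented, capping with the fundamental class $[M^3]$ gives an isomorphism $\HH^1(M^3;\Z\pi) \xrightarrow{\ \cong\ } \HH_2(M^3;\Z\pi)$. Because $\tM$ is simply connected we have $\HH_1(\tM;\Z)=0$, so the classifying map $M^3 \to K(\pi,1)$ (an isomorphism on $\HH^1$ with arbitrary coefficients, since it can be taken to be the inclusion of a subcomplex obtained by attaching cells of dimension $\ge 3$) identifies $\HH^1(M^3;\Z\pi) \cong \HH^1(\pi;\Z\pi)$. Thus $\pi_2(M^3)$ is carried, compatibly with the action of $f$, onto the group-cohomology module $\HH^1(\pi;\Z\pi)$. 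It is worth recording the degenerate case separately: if $\pi$ is finite then $\tM$ is a closed simply connected $3$-manifold, and Poincaré duality together with universal coefficients already force $\HH_2(\tM;\Z) \cong \HH^1(\tM;\Z) = 0$, so $\pi_2(M^3)=0$ and there is nothing to prove (no appeal to the Poincaré conjecture is needed).

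Finally I would carry out the naturality chase. The diffeomorphism $f$ acts compatibly on the chain of groups above. On $\HH^1(\pi;\Z\pi)$ the induced map is the one coming from the automorphism $f_* = \operatorname{id}$ of $\pi$ acting simultaneously on the group and on the coefficients, hence it is literally the identity; and since $f$ is orientation-preserving, $f_*[M^3] = [M^3]$, so naturality of the cap product shows that the Poincaré duality isomorphism intertwines the identity on $\HH^1(M^3;\Z\pi)$ with $\tf_*$ on $\HH_2(M^3;\Z\pi)$, with no leftover sign or twist. Chasing the resulting commutative square yields $\tf_* = \operatorname{id}$ on $\HH_2(M^3;\Z\pi) = \pi_2(M^3, x_0)$, as desired.

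The main obstacle I anticipate is precisely the bookkeeping in this last step: one must verify that each of the three identifications ($\pi_2(M^3) = \HH_2(M^3;\Z\pi)$, Poincaré duality, and $\HH^1(M^3;\Z\pi) = \HH^1(\pi;\Z\pi)$) is equivariant for the relevant group actions, so that the two hypotheses — that $f$ fixes $\pi_1$ pointwise and preserves orientation — really do pin every induced map down to the identity, with no hidden conjugation by an element of $\pi$. This is also where the basepoint hypothesis is essential, since for a diffeomorphism inducing only an inner automorphism of $\pi_1$ the corresponding map on $\pi_2$ would be the module action of a group element rather than the identity. The remaining ingredients (Hurewicz, Poincaré duality with local coefficients for closed oriented manifolds, and universal coefficients) are entirely standard.
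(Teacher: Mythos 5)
Your proof is correct, and its skeleton --- pass to the universal cover, identify $\pi_2$ with $\HH_2(\tM^3)$, and use Poincar\'e duality to reduce everything to a first-cohomology group controlled by $\pi_1$ --- is exactly the paper's. The difference is in how the two arguments finish. The paper works with $\HH^1_c(\tM^3) = \varinjlim \RH^0(\tM^3\setminus K)$ and gives a bare-hands argument: a class is a locally constant $\Z$-valued function on the complement of a compactum, the lift $\tf$ fixes every point of the $\pi_1$-orbit of $\tx_0$ because $f_*=\mathrm{id}$, and that orbit meets every unbounded complementary component, so $\kappa = \kappa\circ\tf^{-1}$. You instead phrase the duality with local coefficients, identify $\HH^1(M^3;\Z\pi)$ with $\HH^1(\pi;\Z\pi)$ via the classifying map, and invoke functoriality: the identity automorphism of $\pi$, acting simultaneously on the group and the coefficients, induces the identity. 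Since $\HH^*_c(\tM^3)\cong \HH^*(M^3;\Z\pi)$ for $M^3$ compact, the two target groups are literally the same, so this is a repackaging of the same duality rather than a different mechanism; what your version buys is that the hypothesis ``$f_*=\mathrm{id}$ on $\pi_1$'' is consumed purely formally, at the cost of setting up Poincar\'e duality with local coefficients and its naturality, whereas the paper's version is more elementary and self-contained. Your closing paragraph correctly isolates the one point where real care is needed --- the equivariance of the identifications and the dependence on the choice of lift --- which is precisely what the paper handles by fixing the basepoint-preserving lift $\tf$ and observing that it fixes the entire orbit of $\tx_0$.
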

\begin{proof}
Let $(\tM^3,\tx_0) \rightarrow (M^3,x_0)$ be the universal cover of $(M^3,x_0)$.  Let
$\tf\colon (\tM^3,\tx_0) \rightarrow (\tM^3,\tx_0)$ be the lift of $f$.  To prove that
$f$ acts trivially on $\pi_2(M^3,x_0)$, it is enough to prove that $\tf$ acts trivially
on $\pi_2(\tM^3,\tx_0) = \HH_2(\tM^3)$.  By Poincar\'{e} duality, we have
\begin{equation}
\label{eqn:identifyh2}
\HH_2(\tM^3) \cong \HH^1_c(\tM^3) = \lim_{\substack{\rightarrow\\ K}} \HH^1(\tM^3,\tM^3 \setminus K) = \lim_{\substack{\rightarrow\\ K}} \RH^0(\tM^3 \setminus K).
\end{equation}
Here the limit is over compact subspaces $K$ of $\tM^3$ and the final equality comes from the long exact sequence of the
pair $(\tM^3,\tM^3 \setminus K)$ and the fact that $\tM^3$ is $1$-connected.  Elements of $\RH^0(\tM^3 \setminus K)$ can
be interpreted as locally constant functions 
$\kappa\colon \tM^3 \setminus K \rightarrow \Z$ modulo the globally constant functions.  Fix such a
$\kappa\colon \tM^3 \setminus K \rightarrow \Z$, and let $K'$ be a compact subspace of $\tM^3$ containing
$K \cup \tf(K)$ such that no components of $\tM^3 \setminus K'$ are bounded (i.e., have compact closure).
The image under the homeomorphism $\tf$ of the element of 
$\HH_2(\tM^3)$ represented by $\kappa$ under \eqref{eqn:identifyh2} is represented by the function
\[\kappa \circ \tf^{-1}\colon \tM^3 \setminus K' \rightarrow \Z.\]
We must prove that $\kappa = \kappa \circ \tf^{-1}$ on $\tM^3 \setminus K'$.  The key observation
is that since $f$ acts trivially on $\pi_1(M^3,x_0)$, the lift $\tf$ fixes each point in the
$\pi_1(M^3,x_0)$-orbit of the basepoint $\tx_0$.  This orbit will contain points in each component
of $\tM^3 \setminus K'$, so $\kappa$ and $\kappa \circ \tf^{-1}$ agree on at least
one point in each component of $\tM^3 \setminus K'$.  Since they are locally constant,
we conclude that they are equal everywhere on $\tM^3 \setminus K'$, as desired.
\end{proof}

\paragraph{Mapping class groups of punctured spheres.}
The third and final preliminary result we need is as follows.  For a $3$-manifold $M^3$ with boundary, we define
$\Mod(M^3)$ to be $\pi_0(\Diff^{+}(M^3,\partial M^3))$, i.e., the group of isotopy classes of
orientation-preserving diffeomorphisms of $M^3$ that fix $\partial M^3$ pointwise.

\begin{lemma}
\label{lemma:spheretwists}
Let $X$ be the $3$-manifold with boundary obtained by removing $k$ disjoint open balls from $S^3$.  Then
$\Mod(X)$ is generated by sphere twists about embedded spheres that are parallel to components of $\partial X$.
\end{lemma}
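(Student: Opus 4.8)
The plan is to induct on $k$, the number of boundary spheres, and to realize a given $f \in \Mod(X)$ as a product of sphere twists by analyzing how $f$ acts near the boundary. The base case $k=1$ is essentially the statement that $\Mod(S^3 \setminus B^3) = \Mod(B^3)$, where a diffeomorphism fixing the boundary sphere pointwise is determined up to isotopy by an element of $\pi_1(\SO(3)) \cong \Z/2$ — this is a version of the Cerf/Hatcher "Smale conjecture" type input ($\Diff(B^3, \partial)$ is connected, or more precisely one uses that $\Diff^+(S^3) \simeq \SO(4)$). The single generator is then the twist about a sphere parallel to $\partial X$. For $k = 2$, $X \cong S^2 \times [0,1]$ and the computation is classical: $\Mod(S^2 \times I)$ is generated by the twist about $S^2 \times \{1/2\}$, which is the same (up to the relations already available) as either boundary-parallel twist.

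For the inductive step, given $f \in \Mod(X)$ with $X = S^3 \setminus (k \text{ balls})$, I would first reduce the problem to controlling $f$ on a neighborhood of one boundary component. Pick a properly embedded separating $2$-sphere $S \subset X$ that cuts off a single one of the boundary spheres, say bounding a region $Y \cong S^2 \times [0,1]$ with $\partial Y = S \sqcup (\text{one component of } \partial X)$; the complement $X' = X \setminus \operatorname{Int}(Y)$ is $S^3$ with $k-1$ balls removed. The key geometric input is that $f(S)$ is isotopic to $S$: both are non-nullhomotopic spheres in $X$, so Theorem \ref{theorem:sphereisotopy} (homotopy implies isotopy for spheres) applies once one checks $f(S) \simeq S$, which follows because $\pi_2(X) \cong H_2(X)$ is free of rank $k-1$ and, together with $f$ fixing $\partial X$ pointwise, $f$ must fix the homology class of $S$ — in fact $f$ acts trivially on $\pi_2$ by the same universal-cover argument as in Theorem \ref{theorem:pi2} (here $\widetilde{X} = X$ since $\pi_1(X) = 1$). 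After composing $f$ with an ambient isotopy we may assume $f(S) = S$. Then $f$ restricts to diffeomorphisms of $Y$ and of $X'$, each fixing its own boundary pointwise; the restriction to $Y \cong S^2 \times I$ is a product of boundary-parallel twists (the $k=2$ case), and the restriction to $X'$ is handled by induction. Reassembling, $f$ is a product of sphere twists about spheres parallel to the boundary components of $Y$ and of $X'$; each such sphere is isotopic in $X$ to a component of $\partial X$ (the $S \sqcup (\text{one }\partial X \text{ piece})$ cobordism shows $S$ is boundary-parallel), so $f$ lies in the subgroup generated by boundary-parallel twists.

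The main obstacle is the step where we cut along $S$ and "glue back" the isotopies: a diffeomorphism of $X$ fixing $\partial X$ pointwise, after we arrange $f(S) = S$ setwise, need not fix $S$ pointwise, and isotoping it to do so costs an ambient isotopy whose bookkeeping interacts with the identifications $Y \cong S^2 \times I$ and $X' \cong S^3 \setminus (k-1 \text{ balls})$. Concretely, one must show that the subgroup of $\Mod(X)$ generated by the images of $\Mod(Y)$ and $\Mod(X')$ (under inclusion-induced maps, which are a priori only defined up to choices of collars) already contains $f$, and that any "mismatch" diffeomorphism supported in a collar of $S$ is itself a sphere twist about a parallel copy of $S$ — this is where the $\pi_1(\SO(3)) = \Z/2$ bookkeeping and the relation "$T_{S'} = T_S$ for parallel $S, S'$" from the introduction get used. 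A clean way to package this is via the exact sequence of the pair/restriction map $\Mod(X) \to \Mod(X \setminus S) $ associated to cutting, identifying the kernel as generated by the twist about $S$; the existence and correctness of such a sequence (a Birman-type sequence for cutting along a sphere) is the technical heart, though it is standard enough that it can be cited or sketched quickly.
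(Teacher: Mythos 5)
Your strategy is viable and reaches the right conclusion, but it takes a genuinely different route from the paper, and the route you choose forces you to import substantially more machinery. The paper also inducts on $k$, but instead of cutting along a separating sphere it \emph{fills in} one boundary component with a ball $B$ and uses the fibration $\Diff^{+}(X',\partial X' \sqcup B) \to \Diff^{+}(X',\partial X') \to \Emb^{+}(B,X')$; since $\Emb^{+}(B,X') \simeq X' \times \GLp_3(\R)$ is connected with $\pi_1 = \Z/2$, the long exact sequence immediately gives $\Z/2 \to \Mod(X) \to \Mod(X') \to 0$ with the $\Z/2$ hitting exactly one boundary-parallel twist. This packages, in one homotopy computation about embeddings of a \emph{ball}, everything you defer to the ``technical heart'': the Birman-type cutting sequence along $S$, the $\pi_0/\pi_1(\Diff^{+}(S^2))$ bookkeeping in a collar, and the separate base case $\Mod(S^2\times I) \cong \Z/2$. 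The paper's proof needs only Cerf's theorem $\Mod(S^3)=1$ as external input, whereas yours additionally needs the classical computation for $S^2 \times I$, the cutting sequence, and homotopy-implies-isotopy.

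Two points in your write-up need repair before it would stand as a proof. First, Theorem \ref{theorem:sphereisotopy} is stated for \emph{closed} $3$-manifolds, and you need an ambient isotopy of $X$ rel $\partial X$ carrying $f(S)$ to $S$; the extension to compact manifolds with sphere boundary is true but is not what is quoted, so it must be addressed (note the paper's argument avoids invoking Theorem \ref{theorem:sphereisotopy} in this lemma entirely). Second, the cutting sequence you invoke --- that the stabilizer of $S$ surjects onto $\Mod(Y)\times\Mod(X')$ with kernel generated by $T_S$, and that every class stabilizes $S$ up to isotopy --- is precisely the content of the lemma's inductive step, so ``cited or sketched quickly'' is doing all the work; as written the argument is circular in spirit unless that sequence is actually established. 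If you carry out that step carefully (it amounts to the connectedness and simple connectivity statements for $\Diff^{+}(S^2)\simeq\SO(3)$ applied in a collar of $S$), your proof goes through.
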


\begin{remark}
Just like Theorem \ref{theorem:laudenbach}, Lemma \ref{lemma:spheretwists} does {\em not} assert
that these sphere twists are nontrivial in the mapping class group.  In fact, one can show that they
are trivial if $k=1$ and nontrivial if $k \geq 2$, and the twist subgroup
of the manifold $X$ in Lemma \ref{lemma:spheretwists} is isomorphic to $(\Z/2)^{k-1}$.  Here $(k-1)$
appears instead of $k$ since the product of all the boundary twists is trivial;
see \cite[p.\ 214--215]{HatcherWahl}.
We will not need any of this, so we will not prove it.
\end{remark}

\begin{proof}[Proof of Lemma \ref{lemma:spheretwists}]
The proof will be by induction on $k$.  The base case $k=0$ simply asserts that $\Mod(S^3) = 1$, which is
a theorem of Cerf \cite{CerfConnected}.  We remark that even more is true: the $3$-dimensional Smale Conjecture
proved by Hatcher \cite{HatcherSmale} says that $\Diff^{+}(S^3)$ is homotopy equivalent to $\SO(4)$.  Assume
now that $k>0$ and that the lemma is true for smaller $k$.  Let $X'$ be the result of gluing a closed $3$-ball
$B$ to a component of $\partial X$.  We thus have $\Diff^{+}(X,\partial X) = \Diff^{+}(X',\partial X' \sqcup B)$.
Let $\Emb^{+}(B,X')$ be the space of orientation-preserving embeddings of $B$ into the interior of $X'$.  Restricting elements of
$\Diff^{+}(X',\partial X')$ to $B$ gives a map 
\[\Diff^{+}(X',\partial X') \rightarrow \Emb^{+}(B,X')\]
that fits into a fiber bundle
\[\Diff^{+}(X',\partial X' \sqcup B) \rightarrow \Diff^{+}(X',\partial X') \rightarrow \Emb^{+}(B,X').\]
Identifying $\Mod(X)$ and $\Mod(X')$ with $\pi_0$ of the relevant diffeomorphism groups, 
the associated long exact sequence in homotopy groups contains the segment
\begin{equation}
\label{eqn:inductiveemb}
\pi_1(\Emb^{+}(B,X')) \longrightarrow \Mod(X) \longrightarrow \Mod(X') \longrightarrow \pi_0(\Emb^{+}(B,X')).
\end{equation}
Fix oriented trivializations of the tangent bundles of $B$ and $X'$.  For an orientation-preserving
embedding $\iota\colon B \rightarrow X'$, these
trivializations allow us to identify the derivative $D_0 \iota\colon T_0 B \rightarrow T_{\iota(0)} X'$ with
a matrix in the subgroup $\GLp_3(\R)$ of $\GL_3(\R)$ consisting of matrices whose determinant is positive.  The map $\Emb^{+}(B,X') \rightarrow X' \times \GLp_3(\R)$ taking
an embedding $\iota\colon \rightarrow X'$ to $(\iota(0), D_0 \iota)$ is a homotopy equivalence (see, e.g., \cite[Theorem 9.12]{KupersBook}).
The group $\GLp_3(\R)$ deformation retracts onto its maximal compact subgroup 
$\SO(3) \cong \RP^3$, so we deduce that
\[\pi_0(\Emb^{+}(B,X')) = \pi_0(X' \times \GLp_3(\R)) = 0\]
and
\[\pi_1(\Emb^{+}(B,X')) = \pi_1(X' \times \GLp_3(\R)) = \pi_1(\GLp_3(\R)) = \Z/2.\]
Plugging these into \eqref{eqn:inductiveemb}, we get an exact sequence
\[\Z/2 \longrightarrow \Mod(X) \longrightarrow \Mod(X') \longrightarrow 0.\]
The image of $\Z/2$ in $\Mod(X)$ is a sphere twist about a sphere parallel to a component of $\partial X$, and
by induction $\Mod(X')$ is generated by sphere twists about spheres parallel to components of $\partial X'$.  The
lemma follows.
\end{proof}

\paragraph{The proof.}
We now have all the ingredients needed for the proof of Theorem \ref{theorem:laudenbach} above.

\begin{proof}[Proof of Theorem \ref{theorem:laudenbach}]
Recall that
\[\rho\colon \Mod(M_n) \rightarrow \Out(\pi_1(M_n)) = \Out(F_n)\]
is the natural map.  We must prove the following two facts.

\begin{claims}
\label{claims:surjective}
The map $\rho$ is surjective.
\end{claims}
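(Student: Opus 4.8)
The plan is to realize a standard finite generating set of $\Out(F_n)$ by explicit orientation-preserving diffeomorphisms of $M_n$. This is the elementary half of Theorem~\ref{theorem:laudenbach}: it uses none of the preliminary results above (Theorem~\ref{theorem:sphereisotopy}, Lemma~\ref{lemma:spheretwists}, etc.), which enter only when one later identifies $\ker(\rho)$. A pleasant feature is that a diffeomorphism of $M_n$ moves the basepoint and hence only determines a class in $\Out(F_n)$, so each generator need only be realized up to an inner automorphism.

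First I would fix a concrete model for $M_n$: take $2n$ disjoint smoothly embedded closed balls $B_1^{+},B_1^{-},\dots,B_n^{+},B_n^{-} \subset S^3$ in symmetric position, equip each boundary sphere with standardized collar coordinates, set $P = S^3 \setminus \bigsqcup_i \Interior(B_i^{\pm})$, and for each $i$ glue $\partial B_i^{-}$ to $\partial B_i^{+}$ by a fixed orientation-reversing map that is a reflection in the standardized coordinates. The quotient is $M_n$; it contains $n$ core spheres $\Sigma_i$ (the images of the $\partial B_i^{\pm}$) with connected complement $P$, and a van Kampen argument identifies $\pi_1(M_n)$ with $F_n = \Span{x_1,\dots,x_n}$, where $x_i$ runs once through the $i$-th gluing tube. (These $\Sigma_i$ are exactly the core spheres named in Theorem~\ref{theorem:laudenbach}.)

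Then I would realize each member of a Nielsen-type generating set of $\Out(F_n)$: the symmetric group $S_n$ permuting the basis, the inversion $x_1 \mapsto x_1^{-1}$, and the transvection $x_1 \mapsto x_1 x_2$. The symmetric group comes from diffeomorphisms of $S^3$ permuting the $B_i^{\pm}$ compatibly with the standardized coordinates (these exist by the isotopy extension theorem), which descend to $M_n$ because the gluing data was chosen uniformly. The inversion comes from a diffeomorphism of $S^3$ interchanging $B_1^{+}$ with $B_1^{-}$ and fixing the other balls; since the gluing map is a reflection this descends to a diffeomorphism of $M_n$ reversing the first tube, hence inducing $x_1 \mapsto x_1^{-1}$ up to conjugacy. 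The transvection comes from a tube slide: drag the end $B_1^{+}$ of the first tube once along an embedded loop in $M_n$ that passes through the second tube and returns, supporting the whole map in a thin neighborhood of that loop; this fixes $x_j$ for $j \neq 1$ and sends $x_1$ to $x_1 x_2^{\pm 1}$, again up to conjugacy. Conjugating the transvection by the realized permutations and inversions then produces all of Nielsen's transvections $x_i \mapsto x_i x_j^{\pm 1}$, so the realized classes generate $\Out(F_n)$ and $\rho$ is onto.

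The step needing the most care is the tube slide: making the ``drag along an embedded loop'' diffeomorphism precise and checking exactly its effect on $\pi_1$ -- in particular routing the connecting arcs so that one genuinely reads off $x_1 \mapsto x_1 x_2$ up to an inner automorphism rather than something messier. This is a routine slide-homeomorphism construction, and nothing else in the argument is more than bookkeeping; the real content of Theorem~\ref{theorem:laudenbach} is not surjectivity but the determination of the kernel, which is where the preliminary results are spent.
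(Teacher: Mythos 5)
Your proposal is correct and follows essentially the same route as the paper: both realize a finite generating set of $\Out(F_n)$ (inversions and transvections, in your case also permutations) by explicit diffeomorphisms of the model of $M_n$ obtained from $S^3$ minus $2n$ balls with boundary spheres glued in pairs, with the transvections realized by dragging one boundary sphere along a loop through another tube. The paper treats this at the same level of detail you do, calling the realization of the generators ``an easy exercise'' illustrated by the slide diffeomorphism for $R_{12}$.
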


\Figure{figure:glueup}{GlueUp}{On the left hand side, $M_3$ is obtained by gluing the $6$ boundary components
of $X$ together in pairs as indicated.  The generators for $\pi_1(M_3) = F_3$ are $\{a_1,a_2,a_3\}$.  In the
middle, we indicate a diffeomorphism $\phi\colon M_3 \rightarrow M_3$ that drags one boundary sphere of $X$ along
a closed path.  As is shown on the right, on $\pi_1(M_3)$ the diffeomorphism $\phi$ takes $a_1$ to $a_1 a_2$.}{95}

Let $X$ be the result of removing $2n$ disjoint open balls from $S^3$.  As in
Figure \ref{figure:glueup}, we can identify $M_n$ with the result of gluing the boundary
component of $X$ together in pairs.  Let $\{a_1,\ldots,a_n\} \in \pi_1(M_n) = F_n$ be
the generators indicated in Figure \ref{figure:glueup}.  An old theorem of Nielsen (\cite{NielsenGen}; see \cite[\S I.4]{LyndonSchupp} for a textbook
reference) says that the group $\Aut(F_n)$ is generated
by the following elements:
\begin{compactitem}
\item For distinct $1 \leq i,j \leq n$, elements $L_{ij}$ and $R_{ij}$ defined via the formulas
\[L_{ij}(a_k) = \begin{cases}
a_j a_k & \text{if $k=i$}\\
a_k & \text{if $k \neq i$}
\end{cases}
\quad \text{and} \quad
R_{ij}(a_k) = \begin{cases}
a_k a_j & \text{if $k=i$}\\
a_k & \text{if $k \neq i$}
\end{cases}
\quad \quad (1 \leq k \leq n).\]
\item For $1 \leq i \leq n$, elements $I_i$ defined via the formula
\[I_i(a_k) = \begin{cases}
a_k^{-1} & \text{if $k=i$},\\
a_k & \text{if $k \neq i$}.
\end{cases}\]
\end{compactitem}
It is enough to find elements of $\Diff^{+}(M_n)$ realizing these automorphisms.  This is an easy exercise; for
instance, as we show in Figure \ref{figure:glueup} we can realize $R_{12}$ as a diffeomorphism that drags
one boundary sphere of $X$ through another.  We remark that this surjectivity was originally proved by
Whitehead \cite{Whitehead1, Whitehead2}, who used the more complicated generating set of ``Whitehead
transformations''.

\begin{claims}
The kernel of $\rho$ is the twist subgroup $\Tw(M_n)$, and $\Tw(M_n)$ is generated
by the sphere twists about the core spheres of the $n$ summands of $S^2 \times S^1$ in $M_n$.
\end{claims}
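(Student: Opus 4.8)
The plan is to prove the second claim by combining the first claim with the three preliminary results, following Laudenbach's strategy but using Theorem~\ref{theorem:pi2} to streamline the heart of the argument. First I would set up the geometry: fix the decomposition $M_n = X / {\sim}$ from Figure~\ref{figure:glueup}, where $X$ is $S^3$ with $2n$ open balls removed and the boundary spheres are glued in pairs, and let $\Sigma = \{S_1,\ldots,S_n\}$ be the system of core spheres, one from each $S^2 \times S^1$ summand, cutting $M_n$ back open into (a copy of) $X$. Given a mapping class $f \in \ker(\rho)$, represented by $\phi \in \Diff^+(M_n)$ whose action on $\pi_1$ is (after composing with an inner automorphism, i.e.\ adjusting $\phi$ by an isotopy moving the basepoint) the identity automorphism of $F_n$, I want to isotope $\phi$ so that it fixes $\Sigma$ setwise and restricts to the identity on a neighborhood of $\Sigma$; then $\phi$ is supported on $X$, and Lemma~\ref{lemma:spheretwists} applied to $X$ writes it as a product of twists about spheres parallel to $\partial X$, which map down to twists about the core spheres $S_i$ — giving both that $f \in \Tw(M_n)$ and that $\Tw(M_n)$ is generated by the core-sphere twists. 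Conversely $\Tw(M_n) \subseteq \ker(\rho)$ is immediate since a sphere twist fixes all homotopy classes of loops (this was observed in the introduction), so the two inclusions together give $\ker(\rho) = \Tw(M_n)$.

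The substantive step is the isotopy that puts $\phi$ into standard position with respect to $\Sigma$. Here I would argue as follows. Each core sphere $S_i$ is non-nullhomotopic (its class generates a $\Z$-summand detected by the corresponding $H_2$ class, or equivalently by the dual cohomology class), and $\phi$ fixes the homotopy class of each $S_i$: indeed $\phi$ acts on $\pi_2(M_n)$, and since $\pi_1$ acts on $\pi_2$ and the universal cover is built from the sphere system, the class $[S_i] \in \pi_2(M_n)$ is determined by homological data that $\phi$ preserves — this is exactly where Theorem~\ref{theorem:pi2} enters, guaranteeing that a diffeomorphism acting trivially on $\pi_1$ also acts trivially on $\pi_2$, hence fixes each $[S_i]$. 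By Theorem~\ref{theorem:sphereisotopy} (homotopic non-nullhomotopic disjoint sphere systems are ambient isotopic), $\phi(\Sigma)$ is ambient isotopic to $\Sigma$; composing $\phi$ with that ambient isotopy, I may assume $\phi(S_i) = S_i$ for each $i$, and then (using a collar) that $\phi$ fixes a tubular neighborhood $U_i \cong S_i \times [0,1]$ of each $S_i$ setwise and is the identity on each $S_i$ itself, except possibly for a rotation in the $[0,1]$-direction; absorbing such rotations into sphere twists about the $S_i$ (these are precisely the core twists), I reduce to $\phi$ being the identity on all of $\bigcup U_i$.

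At this point $\phi$ is supported on the complement $X = M_n \setminus \bigcup \operatorname{int}(U_i)$, which is diffeomorphic to $S^3$ with $2n$ balls removed, and $\phi|_X \in \Diff^+(X,\partial X)$. By Lemma~\ref{lemma:spheretwists}, $[\phi|_X]$ is a product of sphere twists about spheres parallel to the boundary components of $X$. But each boundary component of $X$ is one of the two sides of some $\partial U_i$, and a twist about a sphere parallel to it is, back in $M_n$, exactly the sphere twist $T_{S_i}$. Therefore $f = [\phi]$ is a product of core-sphere twists, proving the claim. I expect the main obstacle to be the second paragraph: carefully justifying that $\phi$ fixing $\pi_1$ forces $\phi(\Sigma)$ to be homotopic to $\Sigma$ as an unordered disjoint sphere system — one must rule out $\phi$ permuting the $S_i$ nontrivially or sending some $S_i$ to a sphere homotopic to a nontrivial combination of the $S_j$'s, and this is precisely controlled by the triviality of the $\pi_2$-action from Theorem~\ref{theorem:pi2} together with the fact that $\{[S_1],\ldots,[S_n]\}$ is the canonical basis-up-to-sign of $\pi_2(M_n)$ as a $\Z[F_n]$-module; the rest is routine ambient-isotopy bookkeeping and an application of the already-proved Lemma~\ref{lemma:spheretwists}.
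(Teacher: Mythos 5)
Your proposal is correct and takes essentially the same route as the paper's proof: reduce to a diffeomorphism acting trivially on $\pi_1$, invoke Theorem \ref{theorem:pi2} to get a trivial action on $\pi_2$ so that the core sphere system is preserved up to homotopy, apply Theorem \ref{theorem:sphereisotopy} to standardize the map near the spheres, and finish with Lemma \ref{lemma:spheretwists} on the cut-open manifold $X$. If anything, your second paragraph spells out the collar/rotation bookkeeping slightly more explicitly than the paper, which compresses it into the statement that $[f]$ lies in the image of $\Mod(X)\rightarrow\Mod(M_n)$.
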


Clearly $\Tw(M_n) \subset \ker(\rho)$, so it is enough to prove that every element of $\ker(\rho)$ is a
product of sphere twists about the core spheres $S_1,\ldots,S_n$ of the $S^2 \times S^1$ summands of $M_n$.  Consider
some $[f] \in \ker(\rho)$, and let $\iota\colon \sqcup_{i=1}^n S^2 \rightarrow M_n$ be the embedding
of those core spheres.  Fix a basepoint $x_0 \in M_n$.
Isotoping $f$, we can assume that $f(x_0) = x_0$ and that $f_{\ast}\colon \pi_1(M_n,x_0) \rightarrow \pi_1(M_n,x_0)$
is the identity.  Theorem \ref{theorem:pi2} then implies that $f$ also induces the identity on $\pi_2(M_n,x_0)$.
It follows that $\iota$ is homotopic to $f \circ \iota$, so by Theorem \ref{theorem:sphereisotopy} we can
isotope $f$ such that $\iota = f \circ \iota$.  Let $X$ be the result of cutting $M_n$ open along
the image of $\iota$, so $X$ is diffeomorphic to the result of removing $2n$ open balls from $S^3$.  Since
$\iota = f \circ \iota$, the mapping class $[f]$ is in the image of the homomorphism $\Mod(X) \rightarrow \Mod(M_n)$
that glues the boundary components back together.  Lemma \ref{lemma:spheretwists} says that $\Mod(X)$ is generated
by sphere twists about spheres parallel to its boundary components.  These map to the $[T_{S_i}]$ in $\Mod(M_n)$, and
the desired result follows.
\end{proof}

\section{Crossed homomorphisms and exact sequences}
\label{section:crossed}

As preparation for proving Theorem \ref{maintheorem:split}, this section reviews the connection between crossed homomorphisms and split exact sequences.  
Let $G$ and $H$ be groups such that $G$ acts on $H$ on the right.  We will write this action
using superscripts: for $g \in G$ and $h \in H$, the image of $h$ under $g$ will be denoted
$h^g$.  
A {\em crossed homomorphism} from $G$ to $H$ is a set map $\lambda\colon G \rightarrow H$
such that
\[\lambda(g_1 g_2) = \lambda(g_1)^{g_2} \lambda(g_2).\]
This implies in particular that 
\[\lambda(1) = \lambda(1^2) = \lambda(1)^1 \lambda(1) = \lambda(1)^2,\]
so $\lambda(1) = 1$.
If the action of $G$ on $H$ is trivial, then this reduces to the definition of a homomorphism.
Just like for an ordinary homomorphism, the kernel $\ker(\lambda) = \Set{$g \in G$}{$\lambda(g)=1$}$
is a subgroup of $G$; however, it is not necessarily a normal subgroup.

As the following standard lemma shows, these are closely related to splittings of short
exact sequences.

\begin{lemma}
\label{lemma:crossedsemidirect}
Let $G$ be a group and let $A \lhd G$ be an abelian normal subgroup, so $G$ acts on $A$ on the right via the formula
\[a^g = g^{-1} a g \quad \quad (a \in A, g \in G).\]
Letting $Q = G/A$, the short exact sequence
\[1 \longrightarrow A \longrightarrow G \longrightarrow Q \longrightarrow 1\]
splits if and only if there exists a crossed homomorphism $\lambda\colon G \rightarrow A$ that restricts to the identity on $A$.
Moreover, if such a $\lambda$ exists, then we can choose a splitting $Q \rightarrow G$ whose image is $\ker(\lambda)$, so
$G = A \rtimes \ker(\lambda)$.
\end{lemma}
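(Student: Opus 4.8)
The plan is to prove both directions of the equivalence directly, and then extract the refined statement about $\ker(\lambda)$ from the construction in the "if" direction. For the "only if" direction, suppose we have a splitting $s\colon Q \rightarrow G$, so that every $g \in G$ factors uniquely as $g = a \cdot s(q)$ with $a \in A$ and $q = gA \in Q$. I would define $\lambda(g) = a$, i.e.\ $\lambda(g) = g \cdot s(gA)^{-1}$. One checks this lands in $A$ (by construction), restricts to the identity on $A$ (since for $a \in A$ we have $aA = 1 \in Q$ and $s(1) = 1$), and satisfies the crossed homomorphism identity: writing out $\lambda(g_1 g_2) = g_1 g_2 s(g_1 g_2 A)^{-1}$ and inserting $s(g_1 A)^{-1} s(g_1 A) = 1$, the homomorphism property $s(g_1 A)s(g_2 A) = s(g_1 g_2 A)$ lets one regroup this as $\bigl(s(g_2 A)^{-1} \lambda(g_1) s(g_2 A)\bigr)\lambda(g_2)$; since $\lambda(g_1) \in A$ and the action of $G$ on $A$ is by conjugation (and factors through $Q$ because $A$ is abelian and normal), the conjugating element $s(g_2 A)$ may be replaced by any representative of $g_2 A$, giving exactly $\lambda(g_1)^{g_2}\lambda(g_2)$.

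For the "if" direction, suppose $\lambda\colon G \rightarrow A$ is a crossed homomorphism restricting to the identity on $A$. I would set $K = \ker(\lambda)$ and show three things: (i) $K \cap A = 1$, which is immediate since $\lambda|_A = \mathrm{id}_A$; (ii) $KA = G$, i.e.\ every $g \in G$ can be written as (element of $K$)(element of $A$) — the natural candidate is $g = \bigl(g\,\lambda(g)^{-1}\bigr)\lambda(g)$, so I must verify $g\lambda(g)^{-1} \in K$, i.e.\ $\lambda(g\lambda(g)^{-1}) = 1$; and (iii) $A$ is normal (given), so that $G = A \rtimes K$ once (i) and (ii) hold, whence the composite $K \hookrightarrow G \twoheadrightarrow Q$ is an isomorphism and its inverse is the desired splitting with image $K$. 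For (ii), I would compute $\lambda(g\lambda(g)^{-1})$ using the crossed identity twice: $\lambda(g \cdot \lambda(g)^{-1}) = \lambda(g)^{\lambda(g)^{-1}}\,\lambda(\lambda(g)^{-1})$; since $\lambda(g)^{-1} \in A$ and $A$ is abelian, the conjugation action $(\cdot)^{\lambda(g)^{-1}}$ is trivial on $A$, so the first factor is just $\lambda(g)$; and since $\lambda|_A = \mathrm{id}$, the second factor is $\lambda(g)^{-1}$ (using $\lambda(1)=1$ and the identity applied to $h \cdot h^{-1}$ in $A$, or directly that $\lambda$ restricted to $A$ is the identity homomorphism). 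Hence the product is $\lambda(g)\lambda(g)^{-1} = 1$, as needed.

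I expect the main subtlety — not an obstacle so much as the one place requiring care — to be the bookkeeping around the action: checking that conjugation by $s(gA)$ versus an arbitrary lift of $gA$ agree on $A$, which relies essentially on $A$ being \emph{abelian} (so inner automorphisms of $A$ coming from $A$ itself are trivial) and \emph{normal} (so the outer conjugation action of $G$ on $A$ is well-defined). It is worth stating explicitly that the right action $a^g = g^{-1}ag$ descends to an action of $Q = G/A$ precisely because $A$ is abelian, since this is used in both directions. Everything else is routine group-theoretic verification, and the "moreover" clause falls out of the "if" direction construction at no extra cost, since we built the splitting to have image exactly $\ker(\lambda)$.
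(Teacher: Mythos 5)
Your ``if'' direction is correct and is essentially the paper's argument: setting $K=\ker(\lambda)$, you get $K\cap A=1$ from $\lambda|_A=\mathrm{id}_A$, and the computation $\lambda\bigl(g\lambda(g)^{-1}\bigr)=\lambda(g)^{\lambda(g)^{-1}}\,\lambda\bigl(\lambda(g)^{-1}\bigr)=\lambda(g)\,\lambda(g)^{-1}=1$ (using that $A$ is abelian and that $\lambda|_A$ is the identity) shows $G=KA$, whence $K\to Q$ is an isomorphism and $G=A\rtimes K$; the ``moreover'' clause does fall out of this. (The paper uses the element $\lambda(g^{-1})g$ instead of $g\lambda(g)^{-1}$, an immaterial difference.)

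The ``only if'' direction, however, has a genuine error: you put the $A$-factor on the wrong side of the decomposition. With your definition $\lambda(g)=g\,s(gA)^{-1}$ (i.e.\ $g=a\cdot s(gA)$), one finds $\lambda(g_1g_2)=g_1g_2\,s(g_2A)^{-1}s(g_1A)^{-1}=g_1\lambda(g_2)\,s(g_1A)^{-1}=\lambda(g_1)\cdot\bigl(g_1\lambda(g_2)g_1^{-1}\bigr)$, which is the cocycle identity for a \emph{left} action, not the required identity $\lambda(g_1g_2)=\lambda(g_1)^{g_2}\lambda(g_2)$ for the right action $a^g=g^{-1}ag$. The regrouping you claim, namely $g_1g_2\,s(g_1g_2A)^{-1}=\bigl(s(g_2A)^{-1}\lambda(g_1)s(g_2A)\bigr)\lambda(g_2)$, is false: take $G=S_3$, $A=A_3$, $s$ sending the nontrivial coset to $(1\,2)$, and $g_1=(1\,2\,3)$, $g_2=(1\,2)$; then $\lambda(g_1g_2)=(1\,2\,3)$ while $\lambda(g_1)^{g_2}\lambda(g_2)=(1\,3\,2)$. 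The fix is to use the opposite decomposition $g=s(gA)\cdot a$, i.e.\ $\lambda(g)=s(gA)^{-1}g$, which is what the paper does: writing $q_1a_1q_2a_2=q_1q_2\,a_1^{q_2}a_2$ and replacing $a_1^{q_2}$ by $a_1^{q_2a_2}$ (legitimate precisely because $A$ is abelian), one reads off $\lambda(g_1a_1q_2a_2)=a_1^{q_2a_2}a_2=\lambda(g_1)^{g_2}\lambda(g_2)$. Your observation that conjugation by $s(g_2A)$ agrees on $A$ with conjugation by any representative of the coset $g_2A$ is correct, and is still the point where abelianness and normality of $A$ enter in the corrected version.
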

\begin{proof}
If the exact sequence splits, then there exists a subgroup $\oQ$ of $G$ projecting isomorphically to $Q$, so we can uniquely
write all $g \in G$ as $g = q a$ with $q \in \oQ$ and $a \in A$.  This allows us to define a set map
$\lambda\colon G \rightarrow A$ via the formula
\[\lambda(q a) = a \quad \quad (q \in \oQ, a \in A).\]
This restricts to the identity on $A$, and is a crossed homomorphism since for $q_1,q_2 \in \oQ$ and $a_1,a_2 \in A$ we have
\begin{align*}
\lambda(q_1 a_1 q_2 a_2) &= \lambda(q_1 q_2 a_1^{q_2} a_2) = \lambda(q_1 q_2 a_1^{q_2 a_2} a_2) = a_1^{q_2 a_2} a_2 = \lambda(q_1 a_1)^{q_2 a_2} \lambda(q_2 a_2).
\end{align*}
We remark that the second equality is where we use the fact that $A$ is abelian.

Conversely, assume that there exists a crossed homomorphism $\lambda\colon G \rightarrow A$ that restricts to the identity on $A$.
Define $\oQ = \ker(\lambda)$, so $\oQ < G$ satisfies $\oQ \cap A = 1$.  To prove the theorem, 
we must prove that the surjection
$\pi\colon G \rightarrow Q$ restricts to an isomorphism $\pi\colon \oQ \rightarrow Q$.  Since $\oQ \cap A = 1$, the projection
$\pi\colon \oQ \rightarrow Q$ is injective, so we must only prove that it is surjective.  Consider $q \in Q$.  We
can find some $g \in G$ such that $\pi(g) = q$.  Since $\lambda(g^{-1}) \in A$, we have
$\pi(\lambda(g^{-1}) g) = \pi(g) = q$, so it is enough to prove that $\lambda(g^{-1}) g \in \oQ = \ker(\lambda)$.  For
this, we compute
\[\lambda(\lambda(g^{-1}) g) = \lambda(\lambda(g^{-1}))^g \lambda(g) = \lambda(g^{-1})^g \lambda(g) = \lambda(g^{-1} g) = 1.\]
Here the second inequality uses the fact that $\lambda$ restricts to the identity on $A$.
\end{proof}

\section{The derivative crossed homomorphism}
\label{section:derivative}

By Lemma \ref{lemma:crossedsemidirect}, to prove that the exact sequence
\[1 \longrightarrow \Tw(M_n) \longrightarrow \Mod(M_n) \longrightarrow \Out(F_n) \longrightarrow 1\]
splits, we must construct a crossed homomorphism $\Mod(M_n) \rightarrow \Tw(M_n)$ that restricts to the
identity on $\Tw(M_n)$.  We will do this in two steps, the first in this section and second in the next.
As we said in the introduction, the idea of our construction goes back to work of
Krylov \cite{Krylov} that was further developed by Crowley \cite{Crowley} and Krannich \cite{Krannich}.

\paragraph{Frame bundle.}
What we do in this section works in complete generality, so let $M^3$ be any closed oriented $3$-manifold.  
Let $TM^3$ be the tangent bundle of $M^3$ and
let $\Fr(TM^3)$ be the principle $\GLp_3(\R)$-bundle of oriented frames of $TM^3$.  Here recall
that $\GLp_3(\R)$ is the subgroup of $\GL_3(\R)$ consisting of matrices whose determinant is positive.
The points of $\Fr(TM^3)$
thus consist of orientation-preserving linear isomorphisms $\tau\colon \R^3 \rightarrow T_p M^3$, where
$p \in M^3$ is a point.  The group $\GLp_3(\R)$ act on $\Fr(TM^3)$ on the right in the usual way: regarding
elements of $\GLp_3(\R)$ as isomorphisms $\R^3 \rightarrow \R^3$, we have
\[\tau \cdot M = \tau \circ M \quad \text{for $\tau\colon \R^3 \rightarrow T_p M^3$ in $\Fr(TM^3)$ and $M \in \GLp_3(\R)$}.\]
This action preserves the fibers of the projection $\Fr(TM^3) \rightarrow M^3$, and its restriction
to each fiber is simply transitive.

\paragraph{Trivializations.}
Since $M^3$ is oriented, its tangent bundle $TM^3$ is trivial.  An {\em oriented trivialization} of $TM^3$
is a continuous section $\sigma\colon M^3 \rightarrow \Fr(TM^3)$ of the bundle $\Fr(TM^3) \rightarrow M^3$.
Let $\Triv(M^3)$ be the set of oriented trivializations of $TM^3$ and let
$C(M^3,\GLp_3(\R))$ be the space of continuous maps $M^3 \rightarrow \GLp_3(\R)$.  The group structure
of $\GLp_3(\R)$ endows $C(M^3,\GLp_3(\R))$ with the structure of a topological group, and $C(M^3,\GLp_3(\R))$
acts continuously on $\Triv(M^3)$ on the right via the formula
\begin{align*}
\sigma \cdot \phi = \Big(p \mapsto \sigma(p) \cdot \phi(p)\Big) \quad &\text{for
$\sigma\colon M^3 \rightarrow \Fr(TM^3)$ in $\Triv(M^3)$}\\ 
&\quad\quad\quad\text{and $\phi\colon M^3 \rightarrow \GLp_3(\R)$ in $C(M^3,\GLp_3(\R))$.}
\end{align*}
The action is also simply transitive.

\paragraph{Diffeomorphism actions.}
For $f \in \Diff^{+}(M^3)$, the derivative $Df$ of $f$ induces a map
$(Df)_{\ast}\colon \Fr(TM^3) \rightarrow \Fr(TM^3)$ defined via the formula
\[(Df)_{\ast}(\tau) = \Big(\left(D_p f\right) \circ \tau\colon \R^3 \rightarrow T_{f(p)} M^3\Big) \quad
\text{for $\tau\colon \R^3 \rightarrow T_p M^3$ in $\Fr(TM^3)$}.\]
Using this, we can define a right action of $\Diff^{+}(M^3)$ on $\Triv(M^3)$ via the following formula, where
we use superscripts to avoid confusing this action with the above action of $C(M^3,\GLp_3(\R))$:
\[\sigma^f = (Df^{-1})_{\ast} \circ \sigma \circ f \quad \text{for $f \in \Diff^{+}(M^3)$ and $\sigma\colon M^3 \rightarrow \Fr(TM^3)$ in $\Triv(M^3)$.}\]
The group $\Diff^{+}(M^3)$ also has a right action on $C(M^3,\GLp_3(\R))$ defined via the formula
\[\phi^f = \phi \circ f \quad \text{for $f \in \Diff^{+}(M^3)$ and $\phi\colon M^3 \rightarrow \GLp_3(\R)$ in $C(M^3,\GLp_3(\R))$.}\]
These three different actions are related by the formula
\begin{align*}
\left(\sigma \cdot \phi\right)^f = \sigma^f \cdot \phi^f \quad &\text{for $f \in \Diff^{+}(M^3)$ and $\phi \in C(M^3,\GLp_3(\R))$ and $\sigma \in \Triv(M^3)$.}
\end{align*}
Both sides of this formula are the element of $\Triv(M^3)$ whose value at a point $p \in M^3$ is the linear
isomorphism $\R^3 \rightarrow T_p M^3$ given by
\[\big(D_{f(p)} f^{-1}\big) \circ \big(\sigma\left(f\left(p\right)\right)\big) \circ \big(\phi\left(f\left(p\right)\right)\big).\]

\paragraph{Derivative crossed homomorphism.}
Our next goal is to construct a crossed homomorphism 
\[\cD\colon \Diff^{+}(M^3) \rightarrow C(M^3,\GLp_3(\R))\]
that we will call the {\em derivative crossed homomorphism}.  In a
suitable sense, it encodes the action of $\Diff^{+}(M^3)$ on $\Triv(M^3)$.
The derivative crossed homomorphism depends on a choice of a base trivialization
$\sigma_0 \in \Triv(M^3)$ that we fix once and for all.  Now consider $f \in \Diff^{+}(M^3)$.
We have $\sigma_0^f \in \Triv(M^3)$, and as we noted above the topological group $C(M^3,\GLp_3(\R))$ acts
simply transitively on $\Triv(M^3)$.  It follows that there exists a unique $\phi_f \in C(M^3,\GLp_3(\R))$
such that
\[\sigma_0^f = \sigma_0 \cdot \phi_f.\]
We define $\cD(f) = \phi_f^{-1}$.  Here the inverse refers to the group structure
on the space $C(M^3,\GLp_3(\R))$ induced by the group structure on $\GLp_3(\R)$.  
The inverse will be needed to make $\cD$ a crossed homomorphism -- if you
examine the formulas below, you will see that without it $\cD$ would be a crossed anti-homomorphism.

To check that $\cD$ is indeed a crossed homomorphism, note that for
$f_1,f_2 \in \Diff^{+}(M^3)$ we have
\[\sigma_0^{f_1 f_2} = \sigma_0 \cdot \cD(f_1 f_2)^{-1}\]
and
\begin{align*}
\sigma_0^{f_1 f_2} &= \left(\sigma_0^{f_1}\right)^{f_2} \\
&= \left(\sigma_0 \cdot \cD\left(f_1\right)^{-1}\right)^{f_2} \\
&= \sigma_0^{f_2} \cdot \left(\cD\left(f_1\right)^{f_2}\right)^{-1} \\
&= \sigma_0 \cdot \cD(f_2)^{-1} \cdot \left(\cD\left(f_1\right)^{f_2}\right)^{-1}.
\end{align*}
Here again the inverses refer to the group structure on the space $C(M^3,\GLp_3(\R))$ induced by the group structure on $\GLp_3(\R)$.  We thus have
\[\cD(f_1 f_2) = \cD\left(f_1\right)^{f_2} \cdot \cD(f_2),\]
as desired.

\paragraph{Homotopy classes.}
We now pass to homotopy.  Let $[\sigma]$ denote the homotopy class of $\sigma \in \Triv(M^3)$ and let
\[\HTriv(M^3) = \Set{$[\sigma]$}{$\sigma \in \Triv(M^3)$},\]
and let $[\phi]$ denote the homotopy class of $\phi \in C(M^3,\GLp_3(\R))$ and let
\[[M^3,\GLp_3(\R)] = \Set{$[\phi]$}{$\phi \in C(M^3,\GLp_3(\R))$}.\]
Finally, let $\Mod(M^3) = \pi_0(\Diff^{+}(M^3))$ denote the mapping class group of $M^3$, and
for $f \in \Diff^{+}(M^3)$ let $[f] \in \Mod(M^3)$ denote its isotopy class.
The group structures of $\Diff^{+}(M^3)$ and $C(M^3,\GLp_3(\R))$ induce group
structures on $\Mod(M^3)$ and $[M^3,\GLp_3(\R)]$, and the right actions
of $\Diff^{+}(M^3)$ and $C(M^3,\GLp_3(\R))$ on $\Triv(M^3)$ induce right actions
of $\Mod(M^3)$ and $[M^3,\GLp_3(\R)]$ on $\HTriv(M^3)$ that we will continue
to write with superscripts and $\cdot$'s, respectively.  For $[f] \in \Mod(M^3)$ and $[\phi] \in [M^3,\GLp_3(\R)]$
and $[\sigma] \in \HTriv(M^3)$, we still have the relationship
\[\left([\sigma] \cdot [\phi]\right)^{[f]} = [\sigma]^{[f]} \cdot [\phi]^{[f]}.\]
Finally, the derivative crossed homomorphism $\cD\colon \Diff^{+}(M^3) \rightarrow C(M^3,\GLp_3(\R))$
descends to a derivative crossed homomorphism
\[\fD\colon \Mod(M^3) \rightarrow [M^3,\GLp_3(\R)]\]
whose characteristic property is that
\[[\sigma_0]^{[f]} = [\sigma_0] \cdot \fD([f])^{-1} \quad \text{for $[f] \in \Mod(M^3)$}.\]

\section{The twisting crossed homomorphism}
\label{section:twisting}

Just like in the last section, let $M^3$ be a closed oriented $3$-manifold.  Fix some $\sigma_0 \in \Triv(M^3)$, and let $\fD\colon \Mod(M^3) \rightarrow [M^3,\GLp_3(\R)]$
be the associated derivative crossed homomorphism.  

\paragraph{Twisting crossed homomorphism.}
The group $\GLp_3(\R)$ deformation retracts to its maximal 
compact subgroup $\SO(3) \cong \RP^3$.
We thus have $\pi_1(\GLp_3(\R)) \cong \Z/2$.  The $\pi_1$-functor therefore induces a group homomorphism
\[h\colon [M^3,\GLp_3(\R)] \longrightarrow \Hom(\pi_1(M^3),\Z/2) = \HH^1(M^3;\Z/2).\]
The group $\Mod(M_3)$ acts on the right on both $[M^3,\GLp_3(\R)]$ and $\HH^1(M^3;\Z/2)$, and $h$ commutes with
this action in the sense that for $[f] \in \Mod(M^3)$ and $[\phi] \in [M^3,\GLp_3(\R)]$ we have
\[h\left(\left[\phi\right]^{\left[f\right]}\right) = h\left(\left[\phi\right]\right)^{\left[f\right]}.\]
This implies that the composition of $h$ with the derivative crossed homomorphism $\fD$ is a crossed homomorphism
\[\fT\colon \Mod(M^3) \longrightarrow \HH^1(M^3;\Z/2)\]
that we will call the {\em twisting crossed homomorphism}.  Since the twist subgroup $\Tw(M^3) < \Mod(M^3)$ acts trivially
on $\pi_1(M^3)$, the restriction of $\fT$ to $\Tw(M^3)$ is a homomorphism (not just a crossed homomorphism).  

\paragraph{Effect on sphere twists.}
The following lemma shows how to calculate $\fT$ on a sphere twist:

\begin{lemma}
\label{lemma:spheretwist}
Let $S$ be an embedded $2$-sphere in $M^3$.  Then $\fT(T_S) \in \HH^1(M^3;\Z/2)$ is the cohomology class
that is Poincar\'{e} dual to $[S] \in \HH_2(M^3;\Z/2)$.
\end{lemma}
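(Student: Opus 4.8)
The plan is to reduce the computation to a purely local one near the sphere $S$, where the twist is supported. First I would fix a tubular neighborhood $U \cong S \times [0,1]$ of $S$ and recall that $T_S$ is represented by a diffeomorphism $\tau$ which is the identity outside $U$ and on $U$ has the form $\tau(s,t) = (\ell(t)\cdot s, t)$, where $\ell\colon [0,1]\to \SO(3)$ generates $\pi_1(\SO(3))$. Since $\tau$ is the identity outside $U$, the derivative $D\tau$ differs from the identity only on $U$, so the function $\phi_\tau \in C(M^3,\GLp_3(\R))$ appearing in the definition of the derivative crossed homomorphism is (homotopic to) the constant map to $\mathrm{id}$ away from $U$, and on $U$ it records the loop-valued data coming from $\ell$. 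To make this precise I would compute $\sigma_0^\tau = (D\tau^{-1})_\ast \circ \sigma_0 \circ \tau$ in terms of $\sigma_0$; choosing $\sigma_0$ so that on $U$ it is adapted to the product structure $S\times[0,1]$ (which we may do, since we only care about the homotopy class of the answer and all trivializations are homotopic after the discussion in Section~\ref{section:derivative} once restricted appropriately), the resulting $\phi_\tau$ is, up to homotopy, the composite $M^3 \to M^3/(M^3\setminus U) \simeq U/\partial U \to \GLp_3(\R)$ built from the rotation loop $\ell$ in the normal direction.

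Next I would identify the class in $\HH^1(M^3;\Z/2) = \Hom(\pi_1(M^3),\Z/2)$ determined by $[\phi_\tau]$ under the map $[M^3,\GLp_3(\R)] \to \Hom(\pi_1(M^3),\Z/2)$ from Section~\ref{section:twisting}. This map sends $[\phi]$ to the homomorphism $\gamma \mapsto [\phi\circ\gamma] \in \pi_1(\GLp_3(\R))\cong\Z/2$. So I need: for a loop $\gamma$ in $M^3$, the value $\fT(T_S)(\gamma)$ equals the class of $\phi_\tau\circ\gamma$ in $\pi_1(\SO(3))\cong\Z/2$. Because $\phi_\tau$ is supported (up to homotopy) near $S$ and there records exactly one full rotation $\ell$ traversed as one crosses $S$ transversally, $\phi_\tau\circ\gamma$ is nontrivial in $\pi_1(\SO(3))$ precisely when $\gamma$ crosses $S$ an odd number of times, i.e.\ when the mod $2$ intersection number $\gamma \cdot S$ is $1$. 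But $\gamma \mapsto \gamma\cdot [S] \bmod 2$ is exactly evaluation of the Poincar\'e dual of $[S]\in\HH_2(M^3;\Z/2)$ on $\gamma$. Hence $\fT(T_S)$ is the Poincar\'e dual of $[S]$, which is the claim.

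The step I expect to be the main obstacle is the first one: carefully producing the homotopy that localizes $\phi_\tau$ near $S$ and showing that on a small transversal arc it represents exactly the generator $\ell$ of $\pi_1(\SO(3))$ rather than, say, a multiple or a constant. The subtlety is that $\phi_\tau$ a priori depends on the chosen base trivialization $\sigma_0$ and on the parametrization of the tubular neighborhood, and one must check these choices do not affect the homotopy class of the composite with a loop — this is where I would invoke that any two oriented trivializations of $TM^3|_U$ differ by a map $U\to\GLp_3(\R)$, which (as $U \simeq S^2$) is connected, so such a change alters $\phi_\tau$ by a nullhomotopic factor and hence does not change its image in $\Hom(\pi_1(M^3),\Z/2)$. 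A clean way to organize this is to work in the \emph{based} setting $M_{n,1}$ and only at the end pass back, or alternatively to observe that both $\fT(T_S)$ and the Poincar\'e dual of $[S]$ are natural under diffeomorphisms, reducing to the model case $M^3 = S^2\times S^1$ with $S$ the core sphere, where the computation is explicit: $\pi_1 = \Z$ generated by $\ast\times S^1$, this generator crosses $S$ once, and the rotation $\ell$ contributes the nontrivial element of $\pi_1(\SO(3))$.
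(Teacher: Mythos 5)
Your proposal is correct and follows essentially the same route as the paper: both compute $\fT(T_S)(\gamma)$ as the class of the loop $\cD(\tau)\circ\gamma$ in $\pi_1(\GLp_3(\R))\cong\Z/2$ and identify it with the mod-$2$ intersection number of $\gamma$ with $S$, the only cosmetic difference being that the paper performs the localization by homotoping $\gamma$ so that it meets the tubular neighborhood $U$ only along arcs $p_0\times[0,1]$ on the rotation axis (where $\tau$ is fixed pointwise and its derivative traces out $\ell$), rather than by localizing $\phi_\tau$ itself. One small correction: the irrelevance of the choice of trivialization over $U$ follows from $\pi_2(\GLp_3(\R))=0$ (or from the cancellation of the two $\psi$-factors in $\psi^{-1}\phi_\tau(\psi\circ\tau)$ along a $\gamma$ fixed by $\tau$), not from connectedness of $U$.
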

\begin{proof}
Identify $S$ with $S^2 \subset \R^3$.
Recall that $T_S$ is constructed from a loop $\ell\colon [0,1] \rightarrow \SO(3)$ with $\ell(0) = \ell(1) = \text{id}$
that generates $\pi_1(\SO(3),\text{id}) \cong \Z/2$.  This generator rotates $S$ by a full
twist about an axis, and $T_S$ is represented by a diffeomorphism $\tau$ that is the identity outside a tubular neighborhood
$U \cong S \times [0,1]$ of $S$, and on $U$ is defined by $\tau(s,t) = (\ell(t) \cdot s, t)$.  Let $p_0 \in S$ be one
of the two intersection points of the axis of rotation defining $\ell$ with $S$.

Consider a smoothly embedded closed curve $\gamma\colon S^1 \rightarrow M^3$.  Homotoping $\gamma$, we can assume that it only intersects $U$ in segments of the form $p_0 \times [0,1]$ (which it might traverse in either direction).  It follows that
$\tau$ fixes $\gamma$ pointwise.  Let $\cD\colon \Diff^{+}(M^3) \rightarrow C(M^3,\GLp_3(\R))$ be the derivative
crossed homomorphism that descends to $\fD\colon \Mod(M^3) \rightarrow [M^3,\GLp_3(\R)]$ when we pass
to homotopy.  The composition
\[[0,1] \xrightarrow{\gamma} M^3 \xrightarrow{\cD(\tau)} \GLp_3(\R)\]
is a loop whose image in $\pi_1(\GLp_3(\R)) \cong \Z/2$ represents $\fT(T_S)([\gamma])$.  Examining the definitions, we
see that this element of $\pi_1(\GLp_3(\R)) \cong \Z/2$ simply counts the number of times $\gamma$ traverses
$p_0 \times [0,1]$, which equals the $\Z/2$-algebraic intersection number of $\gamma$ with $S$.  The lemma follows.
\end{proof}

\paragraph{Connect sums of $S^2 \times S^1$.}
We now specialize this to the connect sum $M_n$ of $n$ copies of $S^2 \times S^1$.  Recall from
Theorem \ref{theorem:laudenbach} that $\Tw(M_n)$ is generated by the sphere twists about
the core spheres $S^2 \times \ast$ of the $n$ summands $S^2 \times S^1$ of $M_n$.  These clearly commute
with each other and are Poincar\'{e} dual to a basis for $\HH^1(M^3;\Z/2)$, so
Lemma \ref{lemma:spheretwist} implies the following:

\begin{corollary}
\label{corollary:identifytwist}
The twisting crossed homomorphism $\fT\colon \Mod(M_n) \rightarrow \HH^1(M_n;\Z/2)$ restricts to an isomorphism
$\Tw(M_n) \cong \HH^1(M_n;\Z/2)$.
\end{corollary}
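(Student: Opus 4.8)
The plan is to deduce Corollary~\ref{corollary:identifytwist} from Lemma~\ref{lemma:spheretwist} together with Theorem~\ref{theorem:laudenbach}. First I would recall that Theorem~\ref{theorem:laudenbach} tells us $\Tw(M_n)$ is generated by the sphere twists $T_{S_1},\dots,T_{S_n}$ about the core spheres $S_i = S^2 \times \ast$ of the $n$ summands of $S^2 \times S^1$ in $M_n$. As noted in the paragraph before the corollary, these commute, so $\Tw(M_n)$ is a quotient of $(\Z/2)^n$; we do \emph{not} need to know a priori that it is all of $(\Z/2)^n$ --- that will fall out at the end. Since $\fT$ restricted to $\Tw(M_n)$ is a genuine homomorphism (the twist subgroup acts trivially on $\pi_1$, as observed when $\fT$ was defined), it suffices to compute $\fT$ on each generator $T_{S_i}$ and check that the resulting elements of $\HH^1(M_n;\Z/2)$ form a basis.

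Next I would invoke Lemma~\ref{lemma:spheretwist}: $\fT(T_{S_i}) \in \HH^1(M_n;\Z/2)$ is Poincar\'{e} dual to $[S_i] \in \HH_2(M_n;\Z/2)$. So I need the classes $[S_1],\dots,[S_n]$ to be a basis of $\HH_2(M_n;\Z/2) \cong (\Z/2)^n$. This is a standard fact about the connect sum: $\HH_\ast(M_n;\Z/2)$ is computed via Mayer--Vietoris (or by noting $M_n$ is a $K(F_n,1)$-like space built from $S^2 \times S^1$ pieces), and a Mayer--Vietoris / geometric intersection argument shows the core sphere $S_i$ of the $i$-th summand pairs nontrivially (in fact, once) under the $\Z/2$ intersection pairing with the core circle $a_i = \ast \times S^1$ of the $i$-th summand, and trivially with $a_j$ for $j \ne i$. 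Hence the $[S_i]$ are linearly independent in the $n$-dimensional space $\HH_2(M_n;\Z/2)$, so they are a basis, and therefore their Poincar\'{e} duals $\fT(T_{S_i})$ are a basis of $\HH^1(M_n;\Z/2)$.

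It then follows that the homomorphism $\fT|_{\Tw(M_n)}\colon \Tw(M_n) \to \HH^1(M_n;\Z/2)$ is surjective, since its image contains a basis. Combined with the fact that $\Tw(M_n)$ is generated by $n$ commuting involutions --- so $|\Tw(M_n)| \le 2^n = |\HH^1(M_n;\Z/2)|$ --- a surjection between finite groups of equal order is an isomorphism. This simultaneously proves the corollary and recovers Laudenbach's identification $\Tw(M_n) \cong (\Z/2)^n$ as a byproduct, exactly as advertised in the introduction.

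The main obstacle is the (elementary but essential) homological input: verifying that the core spheres $[S_i]$ really are a basis of $\HH_2(M_n;\Z/2)$, i.e.\ setting up the intersection pairing between $\HH_2$ and $\HH_1$ of $M_n$ and evaluating it on the core spheres and core circles. Everything else is formal bookkeeping: $\fT|_{\Tw}$ is a homomorphism by construction, Lemma~\ref{lemma:spheretwist} gives the values on generators, and the counting argument closes the gap between a surjection and an isomorphism. I would present the homological fact cleanly --- noting the $\Z/2$-intersection number of $S_i$ with the dual core circle is $1$ --- rather than grinding through a full Mayer--Vietoris computation, since it is standard.
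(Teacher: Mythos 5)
Your proposal is correct and follows the paper's own route exactly: apply Lemma~\ref{lemma:spheretwist} to the core-sphere generators from Theorem~\ref{theorem:laudenbach}, observe their Poincar\'e duals form a basis of $\HH^1(M_n;\Z/2)$, and close the gap between surjectivity and injectivity by noting $\Tw(M_n)$ is generated by $n$ commuting involutions. The paper leaves the final counting step implicit; your write-up just makes it explicit.
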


In particular, we recover Laudenbach's theorem \cite{LaudenbachSpheres} saying that $\Tw(M_n) \cong (\Z/2)^n$.  
We actually get more: since $\fT\colon \Mod(M_n) \rightarrow \HH^1(M_n;\Z/2)$ is a crossed homomorphism, 
the isomorphism $\Tw(M_n) \cong \HH^1(M_n;\Z/2)$ in
Corollary \ref{corollary:identifytwist} is an isomorphism of $\Mod(M_n)$-modules, where 
$\Mod(M_n)$ acts on its normal subgroup $\Tw(M_n)$ by conjugation.  

\paragraph{Summary.}
Combining Corollary \ref{corollary:identifytwist} with Lemma \ref{lemma:crossedsemidirect} and the
exact sequence
\[1 \longrightarrow \Tw(M_n) \longrightarrow \Mod(M_n) \longrightarrow \Out(F_n) \longrightarrow 1\]
from Theorem \ref{theorem:laudenbach}, we conclude the following:

\begin{theorem}
\label{theorem:weakmain}
Let $[\sigma_0]$ be the homotopy class of a trivialization $\sigma_0$ of the tangent bundle of $M_n$
and let $\fT\colon \Mod(M_n) \rightarrow \HH^1(M_n;\Z/2)$ be the associated twisting crossed homomorphism.  The
following then hold:
\begin{compactitem}
\item $\Mod(M_n) = \Tw(M_n) \rtimes \ker(\fT)$.
\item $\Tw(M_n) \cong \HH^1(M_n;\Z/2)$ as a $\Mod(M_n)$-module.
\item $\ker(\fT) \cong \Out(F_n)$.
\end{compactitem}
\end{theorem}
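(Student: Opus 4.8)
The plan is to obtain Theorem~\ref{theorem:weakmain} as a formal consequence of three ingredients already in hand: the Laudenbach exact sequence (Theorem~\ref{theorem:laudenbach}), the computation of the twisting crossed homomorphism on the twist subgroup (Corollary~\ref{corollary:identifytwist}), and the crossed-homomorphism splitting criterion (Lemma~\ref{lemma:crossedsemidirect}). First I would set $G = \Mod(M_n)$, $A = \Tw(M_n)$, and $Q = \Out(F_n)$. By Theorem~\ref{theorem:laudenbach} the subgroup $A$ is normal in $G$ with $G/A \cong Q$, and we observed in the introduction that $A$ is abelian; so Lemma~\ref{lemma:crossedsemidirect} will apply to $1 \to A \to G \to Q \to 1$ once we exhibit a crossed homomorphism $G \to A$ that restricts to the identity on $A$.

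The obvious candidate is $\fT$, but it is valued in $\HH^1(M_n;\Z/2)$ rather than in $\Tw(M_n)$, so I would first transport it across the isomorphism of Corollary~\ref{corollary:identifytwist}. Let $\psi\colon \HH^1(M_n;\Z/2) \to \Tw(M_n)$ be the inverse of that isomorphism; by the remark following the corollary it is an isomorphism of $\Mod(M_n)$-modules. Set $\lambda = \psi \circ \fT\colon \Mod(M_n) \to \Tw(M_n)$. Since $\psi$ is a $G$-equivariant group homomorphism, applying it to the crossed identity $\fT(g_1 g_2) = \fT(g_1)^{g_2} \fT(g_2)$ shows $\lambda$ is again a crossed homomorphism; since $\fT$ restricted to $\Tw(M_n)$ equals $\psi^{-1}$ (this is Corollary~\ref{corollary:identifytwist}), $\lambda$ restricts to the identity on $\Tw(M_n)$; and since $\psi$ is injective, $\ker(\lambda) = \ker(\fT)$.

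Now I would feed $\lambda$ into Lemma~\ref{lemma:crossedsemidirect}. Its conclusion gives both a splitting of $1 \to \Tw(M_n) \to \Mod(M_n) \to \Out(F_n) \to 1$ and the identification of a complement with $\ker(\lambda)$, i.e.\ $\Mod(M_n) = \Tw(M_n) \rtimes \ker(\lambda) = \Tw(M_n) \rtimes \ker(\fT)$, which is the first bullet. The second bullet is just Corollary~\ref{corollary:identifytwist} together with the remark that its isomorphism is one of $\Mod(M_n)$-modules. For the third bullet I would use the ``moreover'' clause of Lemma~\ref{lemma:crossedsemidirect}: the complement $\ker(\lambda) = \ker(\fT)$ maps isomorphically onto $Q = \Out(F_n)$ under $\rho$, so $\ker(\fT) \cong \Out(F_n)$.

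I do not expect a real obstacle here, since all the genuine work lies in the construction of $\fT$ and the proof of Corollary~\ref{corollary:identifytwist}. The only point needing care is bookkeeping around the target module: one must check that transporting $\fT$ along $\psi$ preserves both the crossed-homomorphism property and the kernel, and that the right action $a^g = g^{-1} a g$ on $\Tw(M_n)$ used in Lemma~\ref{lemma:crossedsemidirect} corresponds under $\psi$ to the action on $\HH^1(M_n;\Z/2)$ through which $\fT$ is a crossed homomorphism --- which is exactly what ``$\Mod(M_n)$-module isomorphism'' in Corollary~\ref{corollary:identifytwist} provides.
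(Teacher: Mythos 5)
Your proposal is correct and follows exactly the paper's route: the paper deduces Theorem~\ref{theorem:weakmain} in one line by combining Corollary~\ref{corollary:identifytwist}, Lemma~\ref{lemma:crossedsemidirect}, and the exact sequence from Theorem~\ref{theorem:laudenbach}, and your transport of $\fT$ along the module isomorphism to get a $\Tw(M_n)$-valued crossed homomorphism restricting to the identity is precisely the bookkeeping the paper leaves implicit.
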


This is almost Theorem \ref{maintheorem:split}.  All that is missing is the fact that
$\ker(\fT) \cong \Out(F_n)$ is the $\Mod(M_n)$-stabilizer of $[\sigma_0]$, which we will
prove in the next section (see Corollary \ref{corollary:stabilizer}).

\section{\texorpdfstring{$\Out(F_n)$}{Out(Fn)} acts trivially on homotopy classes of trivializations}
\label{section:completing}

In this section, we prove the following.

\begin{lemma}
\label{lemma:fixtrivial}
Let $[\sigma_0]$ be the homotopy class of a trivialization $\sigma_0$ of the tangent bundle of $M_n$
and let $\fT\colon \Mod(M_n) \rightarrow \HH^1(M_n;\Z/2)$ be the associated twisting crossed homomorphism.  Then
$\ker(\fT)$ fixes $[\sigma_0]$.
\end{lemma}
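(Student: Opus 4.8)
The goal is to show that $\ker(\fT)$ fixes the homotopy class $[\sigma_0]$. Recall from \S\ref{section:derivative} that the action of $\Mod(M_n)$ on $\HTriv(M_n)$ is related to the derivative crossed homomorphism by $[\sigma_0]^{[f]} = [\sigma_0] \cdot \fD([f])^{-1}$, where $\fD([f]) \in [M_n, \GLp_3(\R)]$ and $[M_n,\GLp_3(\R)]$ acts simply transitively on $\HTriv(M_n)$. So $[f]$ fixes $[\sigma_0]$ if and only if $\fD([f]) = 1$ in $[M_n,\GLp_3(\R)]$. The twisting crossed homomorphism $\fT$ is the composition of $\fD$ with the map $[M_n,\GLp_3(\R)] \to \HH^1(M_n;\Z/2)$ induced by $\pi_1(\GLp_3(\R)) \cong \Z/2$. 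Thus the content of the lemma is that this $\pi_1$-map is \emph{injective} on the relevant image, or more precisely that $\fD([f]) = 1$ whenever $\fT([f]) = 0$. The plan is to compute $[M_n,\GLp_3(\R)]$ directly and show it is detected entirely by $\HH^1(M_n;\Z/2)$, so that the kernel of $[M_n,\GLp_3(\R)] \to \HH^1(M_n;\Z/2)$ is trivial; then $\ker(\fT) \subseteq \ker(\fD)$ and we are done.

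\textbf{Key steps.} First, since $\GLp_3(\R) \simeq \SO(3) \cong \RP^3$, we have $[M_n, \GLp_3(\R)] = [M_n, \SO(3)]$, and this is a group under pointwise multiplication (it is the group of homotopy classes of maps into a topological group). Second, I would analyze $[M_n, \SO(3)]$ using obstruction theory on a CW structure for $M_n$. The manifold $M_n$ is homotopy equivalent to a wedge $\bigvee_n (S^1 \vee S^2) \cup (\text{top cell})$; more concretely it has one $0$-cell, $n$ $1$-cells, $n$ $2$-cells, and one $3$-cell. The homotopy groups in play are $\pi_1(\SO(3)) = \Z/2$, $\pi_2(\SO(3)) = 0$, and $\pi_3(\SO(3)) = \Z$. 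Because $\pi_2(\SO(3)) = 0$, a map on the $1$-skeleton always extends over the $2$-skeleton, and the extension over the $2$-skeleton is unique up to homotopy (again since $\pi_2 = 0$); the extension over the $3$-cell is governed by $\pi_3(\SO(3)) = \Z$. So a map $M_n \to \SO(3)$ is determined up to homotopy by its restriction to the $1$-skeleton (an element of $\Hom(\pi_1 M_n, \Z/2) = \HH^1(M_n;\Z/2)$) together with an obstruction/difference class in $\pi_3(\SO(3)) = \Z$ — but one must check whether this $\Z$ actually contributes to $[M_n,\SO(3)]$ or is killed. Third, and this is the crucial point: I would argue that the image of $\fD$ lands in the subgroup of $[M_n,\SO(3)]$ where the $\pi_3$-component vanishes, equivalently that $\fD([f])$ is always (homotopic to) a map factoring through the $1$-skeleton. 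This should follow because the base trivialization $\sigma_0$ can be chosen compatibly with a handle/cell decomposition and the diffeomorphism $f$ can be normalized so that $\cD(f)$ is supported near lower-dimensional skeleta; alternatively, one can compare with the homotopy class of the map $\GLp_3 \hookrightarrow \GLp_3$ and note that the obstruction to triviality over the top cell is itself a bordism-type invariant that must vanish for a degree-one self-map. Once this is established, $[\sigma_0]^{[f]}$ differs from $[\sigma_0]$ only via the $\HH^1(M_n;\Z/2)$-part, which is exactly $\fT([f])$; hence $\fT([f]) = 0 \implies \fD([f]) = 1 \implies [\sigma_0]^{[f]} = [\sigma_0]$.

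\textbf{Alternative route and main obstacle.} Rather than obstruction theory, one could instead use the fact — provable from Laudenbach's sequence — that $\Mod(M_n)$ is generated by $\Tw(M_n)$ together with lifts of the generators $L_{ij}, R_{ij}, I_i$ of $\Aut(F_n)$ realized by the explicit "sphere-dragging" diffeomorphisms of Figure \ref{figure:glueup}. One checks directly that each such explicit diffeomorphism can be chosen to be the identity on a neighborhood of the base $0$-cell and, more strongly, to preserve $\sigma_0$ up to homotopy; since these generate $\ker(\fT)$ modulo $\Tw(M_n) \cap \ker(\fT)$ and the sphere twists in $\ker(\fT)$ are trivial (as $\fT$ is injective on $\Tw(M_n)$ by Corollary \ref{corollary:identifytwist}, so $\Tw(M_n) \cap \ker(\fT) = 1$), it follows that all of $\ker(\fT)$ preserves $[\sigma_0]$. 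The main obstacle in either approach is the same: controlling the top-cell ($\pi_3(\SO(3)) = \Z$) behavior of $\fD$. In the obstruction-theoretic approach one must rule out a spurious $\Z$ worth of trivializations being permuted; in the generators approach one must verify the chosen diffeomorphisms are genuinely supported away from the essential $3$-handle, or compute their effect on $\sigma_0$ over the top cell and see it is null. I expect the cleanest argument is to observe that the self-diffeomorphism, being orientation-preserving and isotopic through a path that is "local" near the gluing spheres, acts on $\Triv$ in a way whose obstruction class over the top cell is additive under such an isotopy and vanishes for the local models — making the $\HH^1(M_n;\Z/2)$ computation of $\fT$ the complete invariant on $\ker(\fT)$.
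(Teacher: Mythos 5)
Your setup is right: the lemma reduces to showing that $\fD([f])=1$ in $[M_n,\GLp_3(\R)]$ whenever $\fT([f])=0$, and the whole difficulty is the $\pi_3(\SO(3))\cong\Z$ contribution over the top cell. But your proposal does not actually close that gap, and your first plan is based on a false premise. The kernel of the map $[M_n,\GLp_3(\R)]\rightarrow \HH^1(M_n;\Z/2)$ is \emph{not} trivial: a map $M_n\rightarrow \SO(3)\simeq\RP^3$ that is trivial on $\pi_1$ lifts to the universal cover $S^3$, and by the Hopf degree theorem such maps are classified by an integer degree, so this kernel is an entire copy of $\Z$. So $\HH^1(M_n;\Z/2)$ is emphatically not a complete invariant of $[M_n,\SO(3)]$, and one genuinely has to show that the degree of the lift of $\fD([f])$ vanishes for $[f]\in\ker(\fT)$.

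Your two suggested resolutions of this do not work. Normalizing $f$ to be the identity (with identity derivative) on a neighborhood of the top handle only makes $\cD(f)$ constant on a ball, and a map $M_n\rightarrow S^3$ that is constant on a ball can still have any degree — the standard degree-one collapse map $M_n\rightarrow S^3$ is constant outside a ball. Collapsing that ball does not change the homotopy type of $M_n$, so you do not get a factorization through a $2$-complex. Likewise, the assertion that "the obstruction over the top cell is a bordism-type invariant that must vanish for a degree-one self-map" is unsubstantiated; there is no a priori geometric reason for an individual mapping class to have vanishing degree obstruction, and this nonvanishing question is exactly why Laudenbach needed framed cobordism. The same gap infects your generators-based alternative: verifying that each sphere-dragging diffeomorphism fixes $[\sigma_0]$ requires controlling precisely this degree, which you have not done.

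The paper's resolution is algebraic rather than geometric, and you should compare it with your attempt. For $[f]\in G=\ker(\fT)$ one lifts $\fD([f])$ to $[M_n,\tGLp_3(\R)]\cong\Z$ (degree), checks that the two lifts agree up to homotopy so that one obtains a crossed homomorphism $\tfD\colon G\rightarrow\Z$, and then observes that the $\Mod(M_n)$-action on the degree is trivial (orientation-preserving diffeomorphisms have degree one), so $\tfD$ is an honest homomorphism $G\rightarrow\Z$. Since $G\cong\Out(F_n)$ has torsion abelianization, every homomorphism $G\rightarrow\Z$ is zero. This kills the $\Z$ all at once, with no need to analyze any particular diffeomorphism. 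That group-theoretic input is the missing idea in your proposal.
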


Since the $\Mod(M_n)$-stabilizer of $[\sigma_0]$ is clearly contained in $\ker(\fT)$, this implies the following:

\begin{corollary}
\label{corollary:stabilizer}
Let $[\sigma_0]$ be the homotopy class of a trivialization $\sigma_0$ of the tangent bundle of $M_n$
and let $\fT\colon \Mod(M_n) \rightarrow \HH^1(M_n;\Z/2)$ be the associated twisting crossed homomorphism.
Then $\ker(\fT)$ is the $\Mod(M_n)$-stabilizer of $[\sigma_0]$.
\end{corollary}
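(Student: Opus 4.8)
The plan is to prove Lemma~\ref{lemma:fixtrivial}; Corollary~\ref{corollary:stabilizer} is then immediate, since the $\Mod(M_n)$-stabilizer of $[\sigma_0]$ is visibly contained in $\ker(\fT)$. Write $q\colon [M_n,\GLp_3(\R)] \to \HH^1(M_n;\Z/2)$ for the $\pi_1$-induced homomorphism, so that $\fT = q\circ\fD$ and hence $\ker(\fT) = \fD^{-1}(\ker(q))$. Since $[\sigma_0]^{[f]} = [\sigma_0]\cdot\fD([f])^{-1}$ for all $[f]$, it suffices to show that $\fD$ vanishes on $\ker(\fT)$; note that $\fD|_{\ker(\fT)}$ automatically takes values in $\ker(q)$.

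First I would compute $\ker(q)$. As $\GLp_3(\R)$ deformation retracts onto $\SO(3)\cong\RP^3$, we may work with $\SO(3)$, and then $q$ is the natural map $[M_n,\SO(3)]\to\Hom(\pi_1 M_n,\Z/2)=\HH^1(M_n;\Z/2)$. A map $\phi\colon M_n\to\SO(3)$ lies in $\ker(q)$ exactly when $\phi_{\ast}=0$ on $\pi_1$, which by the lifting criterion for covering spaces holds exactly when $\phi$ lifts through the universal double cover $p\colon S^3\to\SO(3)$ (here $S^3=\SU(2)$ and $p$ is a Lie group homomorphism). Thus $\ker(q)$ is the image of the homomorphism $p_{\ast}\colon[M_n,S^3]\to[M_n,\SO(3)]$. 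Since $S^3$ is $2$-connected and $M_n$ is a closed $3$-manifold, elementary obstruction theory identifies $[M_n,S^3]\cong\HH^3(M_n;\Z)\cong\Z$; fix a generator $g$. The class $p_{\ast}g$ has infinite order: if $m\cdot(p_{\ast}g)=p_{\ast}(mg)$ were trivial, then a nullhomotopy of $p\circ(mg)$ would lift, by the homotopy lifting property of $p$, to a homotopy from $mg$ to a locally constant — hence nullhomotopic — map $M_n\to S^3$, forcing $mg=0$ in $[M_n,S^3]\cong\Z$ and so $m=0$. Therefore $\ker(q)$ is infinite cyclic, generated by $p_{\ast}g$.

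Next I would upgrade $\fD|_{\ker(\fT)}$ to a genuine homomorphism and conclude. The action of $\Mod(M_n)$ on $[M_n,\SO(3)]$ is precomposition, $\phi^{[f]}=[\phi\circ f]$; it preserves $\ker(q)$ because $q(\phi\circ f)=f^{\ast}q(\phi)$, and it is \emph{trivial} on $\ker(q)$: writing $\phi=p\circ\tilde\phi$ we have $\phi\circ f=p\circ(\tilde\phi\circ f)$ and $[\tilde\phi\circ f]=f^{\ast}[\tilde\phi]=[\tilde\phi]$ in $[M_n,S^3]\cong\HH^3(M_n;\Z)$, since $f$ is an orientation-preserving diffeomorphism and so acts as the identity on $\HH^3(M_n;\Z)$. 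Because $\fD$ is a crossed homomorphism, for $f_1,f_2\in\ker(\fT)$ we then get $\fD(f_1 f_2)=\fD(f_1)^{[f_2]}\,\fD(f_2)=\fD(f_1)\,\fD(f_2)$, so $\fD|_{\ker(\fT)}\colon\ker(\fT)\to\ker(q)\cong\Z$ is a homomorphism. By Theorem~\ref{theorem:weakmain}, $\ker(\fT)\cong\Out(F_n)$, and $\HH_1(\Out(F_n);\Z)$ is finite for every $n$ (trivially for $n\le 2$, where $\Out(F_0)=1$, $\Out(F_1)=\Z/2$, and $\Out(F_2)\cong\GL_2(\Z)$ has finite abelianization; classically for $n\ge 3$). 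Hence every homomorphism $\ker(\fT)\to\Z$ vanishes, so $\fD|_{\ker(\fT)}$ is trivial and $\ker(\fT)$ fixes $[\sigma_0]$.

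The step I expect to be the real obstacle is the computation of $\ker(q)$, and in particular its torsion-freeness. A priori $\ker(q)$ could be $\Z/m$, and if $m$ were even there would be a nonzero homomorphism out of $\Out(F_n)$ (whose abelianization contains a $\Z/2$ for $n\ge 2$), which would break the argument; ruling this out is exactly the homotopy-lifting computation showing $p_{\ast}g$ has infinite order. Everything else is formal manipulation of the crossed homomorphism $\fD$ together with standard low-dimensional obstruction theory and the well-known finiteness of $\HH_1(\Out(F_n))$.
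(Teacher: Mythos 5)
Your proposal is correct and follows essentially the same route as the paper's proof of Lemma \ref{lemma:fixtrivial}: pass to the universal cover $S^3$ of $\SO(3)$, identify the relevant set of homotopy classes with $\Z$ via degree, observe that orientation-preserving diffeomorphisms act trivially on it so the crossed homomorphism becomes an honest homomorphism to $\Z$, and kill it using the finiteness of $\HH_1(\Out(F_n))$. The only difference is bookkeeping: the paper lifts $\fD|_{\ker(\fT)}$ itself to a crossed homomorphism into $[M_n,\tGLp_3(\R)]\cong\Z$ and shows that lift vanishes, which makes your (correct) extra step verifying that $p_{\ast}\colon[M_n,S^3]\to[M_n,\SO(3)]$ is injective unnecessary.
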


As we noted at the end of \S \ref{section:twisting}, Corollary \ref{corollary:stabilizer} together with 
Theorem \ref{theorem:weakmain} implies Theorem \ref{maintheorem:split} from the introduction.

\begin{proof}[Proof of Lemma \ref{lemma:fixtrivial}]
Let $G = \ker(\fT)$, so by Theorem \ref{theorem:weakmain} we have $G \cong \Out(F_n)$.  Let
$\fD\colon \Mod(M_n) \rightarrow [M_n,\GLp_3(\R)]$ be the derivative crossed homomorphism
associated to $[\sigma_0]$.  Recall that $\GLp_3(\R)$ is homotopy equivalent to
$\RP^3$.  Let $\tGLp_3(\R)$ be the universal cover of $\GLp_3(\R)$, so $\tGLp_3(\R)$ is homotopy
equivalent to $S^3$.  For $[f] \in G$, we know that $\fD([f]) \in [M_n,\GLp_3(\R)]$ induces
the trivial map on $\pi_1$, so we can lift $\fD([f])$ to an element of $[M_n, \tGLp_3(\R)]$.
Though there are two distinct lifts to $\tGLp_3(\R)$ of a map $M_n \rightarrow \GLp_3(\R)$ that induces
the trivial map on $\pi_1$ (the two lifts correspond to a choice of a lift of a basepoint), these two lifts
are homotopic via a homotopy corresponding to right-multiplication by a path in $\tGLp_3(\R)$ from
the identity to the other element of $\tGLp_3(\R)$ projecting to the identity in $\GLp_3(\R)$.  From
this, we see that in fact $\fD$ lifts to a crossed homomorphism
$\tfD\colon G \rightarrow [M_n,\tGLp_3(\R)]$.

Since $\tGLp_3(\R)$ is homotopy equivalent to $S^3$, elements of $[M_n,\tGLp_3(\R)]$ are classified
by their degree, i.e., as a group we have $[M_n,\tGLp_3(\R)] \cong \Z$.  What is more, since orientation-preserving
diffeomorphisms of $M_n$ act by degree $1$, the action of $\Mod(M_n)$ on $[M_n,\tGLp_3(\R)] \cong \Z$ is trivial.
It follows that the crossed homomorphism 
\[\tfD\colon G \rightarrow [M_n,\tGLp_3(\R)] \cong \Z\] 
is an actual homomorphism (not just a crossed homomorphism).  Since the abelianization of $G \cong \Out(F_n)$
is torsion,\footnote{Here is a quick proof of this classical fact.  Since $\Aut(F_n)$ surjects onto $\Out(F_n)$, it is enough
to prove that $(\Aut(F_n))^{\ab}$ is torsion.  Recall the generators $L_{ij}$ and $R_{ij}$ and $I_i$ for $\Aut(F_n)$
from Claim \ref{claims:surjective} of the proof of Theorem \ref{theorem:laudenbach}.  It is enough to prove that each of these generators maps
to an element of $(\Aut(F_n))^{\ab}$ of finite order.  This is trivial for the order-$2$ elements $I_i$, so we
must just deal with the $L_{ij}$ and $R_{ij}$.  The key observation is that 
\[I_j L_{ij} I_j^{-1} = L_{ij}^{-1} \quad \text{and} \quad I_j R_{ij} I_j^{-1} = R_{ij}^{-1}.\]
The elements $L_{ij}$ and $I_j^{-1} L_{ij} I_j = L_{ij}^{-1}$ map to the same element of $(\Aut(F_n))^{\ab}$, so $L_{ij}^{2}$ must
map to $0$.  Similarly, $R_{ij}^2$ maps to $0$ in $(\Aut(F_n))^{\ab}$.}
we deduce that in fact $\tfD$ is trivial.  This implies that $\fD|_G$ is also trivial.  Since
$\fD$ encodes the action of $\Mod(M_n)$ on $[\sigma_0] \in \HTriv(M_n)$ (see \S \ref{section:derivative}), this implies that
$G$ acts trivially on $[\sigma_0]$, as desired.
\end{proof}

\begin{footnotesize}
\noindent
\begin{tabular*}{\linewidth}[t]{@{}p{\widthof{School of Mathematics \& Statistics}+0.3in}@{}p{\widthof{Department of Mathematics}+0.3in}@{}p{\linewidth - \widthof{Department of Mathematics} - \widthof{School of Mathematics \& Statistics}- 0.6in}@{}}
{\raggedright
Tara Brendle\par
School of Mathematics \& Statistics\par
University of Glasgow\par
University Place\par
Glasgow G12 8QQ\par
UK\par
{\tt tara.brendle@glasgow.ac.uk}}
&
{\raggedright
Nathan Broaddus\par
Department of Mathematics\par
Ohio State University\par
231 W. 18th Ave.\par
Columbus, OH 43210\par
USA\par
{\tt broaddus.9@osu.edu}}
&
{\raggedright
Andrew Putman\par
Department of Mathematics\par
University of Notre Dame \par
255 Hurley Hall\par
Notre Dame, IN 46556\par
USA\par
{\tt andyp@nd.edu}}
\end{tabular*}
\end{footnotesize}

\end{document}